\pgfplotsset{compat=1.15}
\definecolor{Blue}{cmyk}{1.0,0.3,0.0,0.0}
\definecolor{Orange}{cmyk}{0.0,0.3,1.0,0.0}
\definecolor{Red}{cmyk}{0.0,1.0,1.0,0.0}
\definecolor{Green}{cmyk}{1,0,0.6,0.0,0.0}
\newtheorem{definition}{Definition}
\newtheorem{lemma}[definition]{Lemma}
\newtheorem{algorithm}[definition]{Algorithm}
\newtheorem{theorem}[definition]{Theorem}
\newtheorem{corollary}[definition]{Corollary}
\newtheorem{remark}[definition]{Remark}
\newcommand*{\N}{\ensuremath{\mathbb{N}}}
\newcommand*{\Z}{\ensuremath{\mathbb{Z}}}
\newcommand*{\R}{\ensuremath{\mathbb{R}}}
\newcommand*{\C}{\ensuremath{\mathbb{C}}}
\newcommand{\EE}{\ensuremath{\mathrm{e}}}
\renewcommand{\i}{\mathrm{i}}
\renewcommand{\phi}{\varphi}
\renewcommand{\rho}{{\varrho}}
\renewcommand{\epsilon}{{\varepsilon}}
\renewcommand{\d}[1]{\,\mathrm{d}#1 \,}
\newcommand{\E}{\mathcal{E}}
\newcommand{\J}{\mathcal{J}} 
\newcommand{\p}{{\mathrm{per}}}
\renewcommand{\k}{\underline{k}}
\newcommand{\grad}{\nabla}
\definecolor{kit-blue20}{rgb}{0.8549, 0.8784, 0.9333}
\begin{document}

\sloppy

\title{A nonuniform mesh method for wave scattered by periodic surfaces }
\author{Tilo Arens\thanks{Institute for Applied and Numerical mathematics, Karlsruhe Institute of Technology (KIT), Karlsruhe, Germany; \texttt{tilo.arens@kit.edu}} \and
Ruming Zhang\thanks{Institute for Applied and Numerical mathematics, Karlsruhe Institute of Technolog (KIT), Karlsruhe, Germany; \texttt{ruming.zhang@kit.edu}}}
\date{}

\maketitle

\begin{abstract}
In this paper, we propose a new nonuniform mesh method to simulate acoustic scattering problems in two dimensional periodic structures with non-periodic incident fields numerically. As existing methods are difficult to extend to higher dimensions, we have designed the new method with such extensions in mind. With the help of the Floquet-Bloch transform, the solution to the original scattering problem is written as an integral of a family of quasi-periodic problems. These are defined in bounded domains for each value of the Floquet parameter which varies in a bounded interval. The key step in our method is the numerical approximation of the integral by a quadrature rule adapted to the regularity of the family of quasi-periodic solutions.  We design a nonuniform mesh with a Gaussian quadrature rule applied on each subinterval. We prove that the numerical method converges exponentially with respect to both the number of subintervals and the number of Gaussian quadrature points. Some numerical experiments are provided to illustrate the results.
\end{abstract}

\section{Introduction}
  
Wave propagating in periodic structures has been a challenging and interesting topic in both theoretical analysis and numerical simulations for the past decades. For quasi-periodic incident fields, there is a  well-established framework (see \cite{Nedel1991,Dobso1992a, Abbou1993,Dobso1994,Bruno1993,Kirsc1993,Bao1994a,Bao1995,Bao1996,Bao1997,Bao2000,Arens1999,Arens2010,Stryc1998,Schmi2003,Elsch1998} for 2D and 3D acoustic, elastic and electromagnetic waves) to reduce the problem to a bounded domain. However, when the incident field is non-periodic, this approach no longer works, requiring much more sophisticated tools to tackle the problem. 

One possible approach is to apply techniques available for rough surface scattering. The basis is provided by an analysis of variational formulations for such problems in weighted Sobolev spaces \cite{Chand2005, Chand2010}. Possible numerical approaches include the integral equation method as studied in \cite{Meier2000, Chand2002, Hasel2004, Arens2006a, Lechl2008a, Li2016} for the Helmholtz equation in two dimensions and \cite{Li2011} for the full Maxwell system in three dimensions. Alternatively, the finite section method also provides a convergent algorithm to approximate the original problem by a bounded one, see \cite{Meier2001,Chand2002,Chand2010} for its applications in both boundary integral equations and finite element methods.

The principal drawback of this approach is the loss of all information and structure relying on the periodic nature of the scatterer. A powerful tool to exploit such structure is provided by the Floquet-Bloch transform, and efficient numerical methods have been developed based on this transform.

For example, penetrable periodic media are considered in \cite{Coatl2012, Hadda2015}, for scattering by periodic surfaces we refer to \cite{Lechl2015e, Lechl2016a, Lechl2017}. Note that the approach has also been extended to the three dimensional bi-periodic surfaces in \cite{Lechl2016b}. The method is proved to be convergent for 2D cases in \cite{Lechl2016a,Lechl2017}, but for 3D cases, convergence proofs are available only for some special situations in \cite{Lechl2016b}. 

After application of the Floquet-Bloch transform, the problem is reduced to solving a family of fully quasi-periodic problems for a range of the Floquet parameter. The solution of the original problem is obtained by inverting this transform. Although, numerically, this amounts to the approximate evaluation of an integral, it nevertheless proves to be a challenging task due to the presence of singularities.

Based on a detailed study of the regularity of the integrand, a high order method has been developed for 2D cases in \cite{Zhang2017e}, achieving any algebraic order of convergence. However, this scheme cannot easily be extended to the 3D case due to the complicated structure of the singularities. This has motivated the research presented in this paper, in which we design a new nonuniform mesh method for the 2D case, which is much more readily extendable to 3D cases.

Based on the singularities of the quasi-periodic solution with respect to the Floquet parameter, we first generate graded meshes in the integration interval, and then apply Gaussian quadrature rule in each sub-interval. The convergence analysis is based on bounds for analytic extensions of the solutions of the quasi-periodic problems with respect to the Floquet parameter. We prove exponential convergence with respect to both the number of graded mesh points and the number of Gaussian nodal points in each subinterval. These estimates are then coupled to error-estimates for the finite element method used to solve each quasi-periodic problem. Finally we give some numerical examples to illustrate our theoretical results.

The rest of this paper is organized as follows. The mathematical model is introduced in Section \ref{sec:model} and the Floquet-Bloch transform is reviewed in Section \ref{sec:floquet-bloch}. Then we discuss the analytic extension of quasi-periodic solutions with respect to the Floquet parameter in Section \ref{sec:extend}. In Section \ref{sec:meshes},  nonuniform meshes are designed for definite integrals with square root singularities. With the results in Section \ref{sec:extend}, we prove exponential convergence of the numerical method, and also give some numerical experiments in the last section.

\section{Mathematical model of scattering problems}
\label{sec:model}

We consider the propagation of time-harmonic waves in the two-dimensional domain $\Omega$ bounded from below by the curve $\Gamma$ given as the graph of a $2\pi$-periodic function $\zeta$, i.e.
\[
  \Omega = \Big\{ (x_1, x_2) : \, x_1 \in \R\, , \; x_2 > \zeta(x_1) \Big \} \, , \qquad 
  \Gamma = \Big\{ (x_1, \zeta(x_1)) : \, x_1\in\R \Big\} \, .
\]
The total field $u$ is assumed to satisfy the Helmholtz equation for some positive wave number $k$,
\begin{equation}
  \label{eq:sca1}
  \Delta u + k^2 \, u = 0 \qquad \text{in } \Omega
\end{equation}
and to satisfy a Dirichlet boundary condition
\begin{equation}
  u = 0 \qquad \text{on } \Gamma \, .
\end{equation}
As is usual, the total field is split into the given incident field and the unknown scattered field, $u = u^i + u^s$. A suitable radiation condition must be imposed on the scattered field to ensure uniqueness and existence of solution, and this requires some additional definitions. For detailed derivations and proofs of the statements made below we refer to \cite{Chand2005, Chand2010}.

Let $H>\max_{t\in\R}\{\zeta(t)\}$ and $\Gamma_H:=\R\times\{H\}$ be a straight horizontal line above $\Gamma$. Let $\Omega_H$ be the periodic strip between $\Gamma$ and $\Gamma_H$. For a visualization of the geometric setting we refer to Figure \ref{fig:sample}.

\begin{figure}
    \centering
    \includegraphics[width=0.8\textwidth]{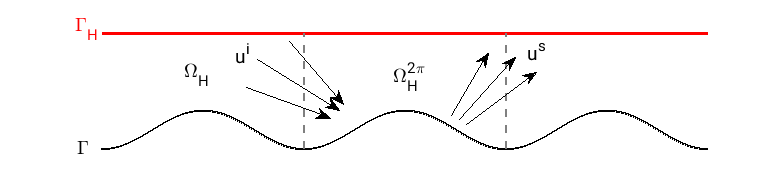}
    \caption{Periodic structure.}
    \label{fig:sample}
\end{figure}

The appropriate spaces for solutions of such a rough surface scattering problem are horizontally weighted Sobolev spaces. Let $H^s(\Omega_H)$ and $H^s_{loc}(\Omega_H)$ denote the standard Sobolev spaces with real exponent $s$.  For any $r\in\R$, let the weighted space $H^s_r(\Omega_H)$ be defined by
\[
  H^s_r(\Omega_H) := \Big\{\phi\in H^s_{loc}(\Omega_H):\, (1+x_1^2)^{r/s}\phi(x_1,x_2)\in H^s(\Omega_H)\Big\} \, .
\]
To accomodate the Dirichlet boundary condition, we also define
\[
  \widetilde{H}_r^1(\Omega_H):=\Big\{\phi\in H_r^1(\Omega_H):\,\phi\big|_{\Gamma}=0\Big\} \, .
\]

To guarantee that the scattered field $u^s$ is propagating upwards, we require that $u^s$ satisfies the following radiation condition:
\begin{equation}
  \label{eq:sar}    
  u^s(x_1,x_2) = \frac{1}{\sqrt{2\pi}}\int_\R \EE^{\i x_1\xi + \i \sqrt{k^2-\xi^2}(x_2-H)} \, \widehat{u}^s(\xi,H) \d\xi, \quad x_2>H,
\end{equation}
where the square root takes non-negative real- and imaginary parts, $\widehat{u}^s(\xi,H)$ is the Fourier transform of $u^s(\cdot,H)$. This condition is also known as the \emph{spectral amplitude representation} and formally expresses the scattered field as a linear superposition of plane upward-propagating and evanescent waves. Detailed arguments on why and in what sense the expression on the right hand side of \eqref{eq:sar} makes sense for $u^s \in H^1_r(\Omega_H)$ for all $|r| < 1$ are given in \cite{Chand2010}. Combining \eqref{eq:sar} with the Neumann trace operator on $\Gamma_H$ gives rise to the Dirichlet-to-Neumann map $T^+ : H^{1/2}_r(\Gamma_H) \to H^{-1/2}_r(\Gamma_H)$,
\[
  (T^+\phi)(x_1) = \i \int_\R \sqrt{k^2-\xi^2} \, \EE^{\i x_1\xi} \, \widehat{\phi}(\xi) \d\xi \, , \quad \text{ where } \phi(x_1) = \int_\R \EE^{\i x_1\xi} \, \widehat{\phi}(\xi) \, \d\xi \, .
\]

With these definitions, we are able to formulate the scattering problem under consideration: Given an incident field $u^i \in H^1_r(\Omega_H)$, where $|r|<1$, find $u \in \widetilde{H}^1_r(\Omega_H)$ such that $u$ satisfies the Helmholtz equation \eqref{eq:sca1} in $\Omega_H$ in the weak sense and the boundary condition
\begin{equation}
   \label{eq:sca3}
   \frac{\partial u}{\partial x_2}-T^+ u = \frac{\partial u^i}{\partial x_2}-T^+ u^i =: f \quad\text{ in } H^{-1/2}_r(\Gamma_H) \, ..
\end{equation}

Explicitly, the weak form of this scattering problem is the following variational formulation: given $f\in H_r^{-1/2}(\Gamma_H)$, where $|r|<1$, find $u\in\widetilde{H}_r^1(\Omega_H)$ such that
\begin{equation}
 \label{eq:var}
 \int_{\Omega_H}\left[\grad u\cdot\grad\overline{\phi}-k^2 u\overline{\phi}\right]\d x-\int_{\Gamma_H}T^+\left(u\big|_{\Gamma_H}\right)\overline{\phi}\d s=\int_{\Gamma_H}f\overline{\phi}\d s
\end{equation}
for all $v\in\widetilde{H}_{-r}^1(\Omega_H)$. 

\begin{theorem}[\cite{Chand2010}]
 For any $|r|<1$, given $f\in H^{-1/2}_r(\Gamma_H)$, there is a unique solution $u\in \widetilde{H}_r^1(\Omega_H)$ of the problem \eqref{eq:var}.
\end{theorem}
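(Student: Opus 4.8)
The plan is to read \eqref{eq:var} as an operator equation $A_r u = F$, where $A_r \colon \widetilde{H}^1_r(\Omega_H) \to \big(\widetilde{H}^1_{-r}(\Omega_H)\big)^\ast$ is induced by the sesquilinear form on the left-hand side and $F$ is the antilinear functional $\phi \mapsto \int_{\Gamma_H} f\,\overline{\phi}\,\mathrm{d}s$. Since the trial and test spaces differ for $r\neq 0$, the correct framework is not Lax--Milgram but its generalized (Babu\v{s}ka--Ne\v{c}as) version: bijectivity is equivalent to an inf--sup condition together with non-degeneracy of the form in its second argument, which in a Fredholm setting reduces to injectivity of $A_r$ and of its transpose. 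I would first settle the Hilbert-space case $r=0$ and then transfer the conclusion to $|r|<1$ by conjugating with the weight and treating the resulting extra terms as a compact perturbation.

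For $r=0$ both spaces coincide with $\widetilde{H}^1(\Omega_H)$, and I would argue via a G\aa rding inequality and the Fredholm alternative. The crucial input is the sign structure of the Dirichlet-to-Neumann map: from its Fourier representation one computes
\begin{equation*}
  \langle T^+\phi,\phi\rangle = \i\int_\R \sqrt{k^2-\xi^2}\,|\widehat{\phi}(\xi)|^2\,\mathrm{d}\xi,
\end{equation*}
so that $\Re\langle T^+\phi,\phi\rangle = -\int_{|\xi|>k}\sqrt{\xi^2-k^2}\,|\widehat\phi|^2\,\mathrm{d}\xi \le 0$ while $\Im\langle T^+\phi,\phi\rangle = \int_{|\xi|<k}\sqrt{k^2-\xi^2}\,|\widehat\phi|^2\,\mathrm{d}\xi \ge 0$. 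The sign of the real part means that $-\int_{\Gamma_H}(T^+u)\overline{u}$ contributes non-negatively to $\Re a(u,u)$; after splitting off the compact volume term $-k^2\int_{\Omega_H} u\,\overline{\phi}$ (compactness of $H^1\hookrightarrow L^2$ on the finite-width strip, via Rellich) and the smoothing part of $T^+$, the remainder is coercive, yielding the G\aa rding inequality and hence that $A_0$ is Fredholm of index zero. Uniqueness is then reduced to the homogeneous problem: choosing $\phi=u$ and reading off the imaginary part forces $\widehat{u}(\cdot,H)$ to vanish on $|\xi|<k$, which together with the radiation condition \eqref{eq:sar} and unique continuation gives $u\equiv0$; by the Fredholm alternative, existence and the bound $\|u\|_{H^1(\Omega_H)}\le C\|f\|_{H^{-1/2}(\Gamma_H)}$ follow.

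To reach $|r|<1$ I would conjugate by the weight $w_r$ defining $H^1_r(\Omega_H)$: setting $v=w_r u$ and $\psi=w_r^{-1}\phi$ transports $A_r$ to an operator $\widetilde{A}_r$ on the fixed space $\widetilde{H}^1(\Omega_H)$. The principal parts $\grad u\cdot\grad\overline{\phi}$ and $-k^2u\overline{\phi}$ reproduce $a(v,\psi)$ exactly, while the commutators generated when $w_r$ is moved across the gradient carry the factor $w_r'/w_r$, which is bounded \emph{and} decays like $|x_1|^{-1}$ (and is proportional to $r$); this decay, combined with the local compact embedding, is precisely what makes these terms a compact perturbation, and a similar commutator estimate handles the non-local term $T^+$. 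Hence $\widetilde{A}_r = A_0 + (\text{compact})$ is Fredholm of index zero for every $|r|<1$. For injectivity, observe that for $r>0$ one has $\widetilde{H}^1_r\subset\widetilde{H}^1\subset\widetilde{H}^1_{-r}$, so any homogeneous weighted solution is already an $\widetilde{H}^1$ solution and vanishes by the $r=0$ result; the case $r<0$, where solutions may grow, is covered by the general radiation-condition uniqueness theorem for rough-surface problems. Fredholmness of index zero together with injectivity then yields bijectivity, and hence a unique solution, for each fixed $|r|<1$.

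The main obstacle is twofold and concentrated precisely at the threshold $|r|=1$. First, verifying that the commutator remainders---especially the one produced by the non-local operator $T^+$, whose symbol $\sqrt{k^2-\xi^2}$ has branch points at $\xi=\pm k$---are genuinely compact from $H^{1/2}_r(\Gamma_H)$ to $H^{-1/2}_r(\Gamma_H)$ requires care, and it is exactly this branch-point behaviour that caps the admissible weight exponent. Second, the clean energy argument that kills the kernel is available only at $r=0$ (for $r<0$ one cannot test with $\phi=u$, since the conjugate weight lands in the wrong space), so uniqueness in the growing spaces must be obtained indirectly through the general radiation-condition uniqueness result; establishing that this argument remains valid throughout the open interval $|r|<1$ is the delicate technical heart of the proof.
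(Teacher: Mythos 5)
The paper itself offers no proof of this theorem: it is quoted verbatim from the reference \cite{Chand2010}, so your proposal can only be measured against the argument in that reference (Chandler-Wilde and Elschner, building on Chandler-Wilde and Monk \cite{Chand2005}). Measured that way, there is a genuine gap, and it sits at the very first step. Your $r=0$ argument rests on ``compactness of $H^1\hookrightarrow L^2$ on the finite-width strip, via Rellich,'' but $\Omega_H$ is bounded only in the $x_2$-direction; it is an \emph{infinite} strip in $x_1$, and the embedding $H^1(\Omega_H)\hookrightarrow L^2(\Omega_H)$ is not compact there (translate a fixed bump function by $x_1\mapsto x_1-n$: the sequence is bounded in $H^1$ but has no $L^2$-convergent subsequence). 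Consequently the volume term $-k^2\int_{\Omega_H}u\overline{\phi}$ is not a compact perturbation, the G{\aa}rding inequality $\Re a(u,u)\geq \|u\|_{H^1}^2-(k^2+1)\|u\|_{L^2}^2$ yields no Fredholm property, and the entire ``G{\aa}rding + Fredholm alternative + uniqueness'' route collapses. This is not a repairable technicality: the failure of compactness is precisely what distinguishes rough-surface scattering from bounded-obstacle or quasi-periodic-cell problems (where the computational domain $\Omega_H^{2\pi}$ \emph{is} bounded and Rellich does apply --- the setting you may have had in mind), and it is the reason \cite{Chand2005} instead proves an inf-sup (Babu\v{s}ka) condition \emph{directly}, through explicit a priori bounds on solutions obtained from Rellich-type identities and properties of the Dirichlet-to-Neumann map, with no appeal to compactness anywhere. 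The weighted case $|r|<1$ in \cite{Chand2010} is then obtained by extending those quantitative estimates to the weighted setting, not by a compact-perturbation argument; the constraint $|r|<1$ emerges from the weighted commutator bounds, not from a Fredholm index count.

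A secondary, but also substantive, gap is your treatment of uniqueness for $r<0$. You appeal to ``the general radiation-condition uniqueness theorem for rough-surface problems,'' but for $r<0$ the space $\widetilde{H}^1_r(\Omega_H)$ contains \emph{growing} functions, and uniqueness in that class is exactly part of what the cited theorem asserts --- there is no off-the-shelf result to invoke, so the argument is circular as stated. The standard (and correct) mechanism is duality: once solvability is established for the exponent $-r>0$, injectivity of the operator $A_r\colon \widetilde{H}^1_r\to\bigl(\widetilde{H}^1_{-r}\bigr)^\ast$ follows from surjectivity of its adjoint, which is the operator of the conjugate (incoming-radiation) problem posed with weight $-r$; no separate uniqueness theorem for growing solutions is needed. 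Your structural reading of \eqref{eq:var} as a non-symmetric trial/test pairing requiring an inf-sup condition is correct, and your sign computations for $T^+$ (namely $\Re\langle T^+\phi,\phi\rangle\leq 0$ and $\Im\langle T^+\phi,\phi\rangle\geq 0$) are accurate and are indeed ingredients of the real proof; but the machinery you build on top of them presupposes a compactness that the geometry does not provide.
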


\begin{remark}
 In this paper, we only consider scattering problems with Dirichlet boundary condition on $\Gamma$. However, the method can also extended to other boundary conditions (e.g., impedance boundary conditions) or penetrable inhomogeneous media. 
\end{remark}

\section{Floquet-Bloch transformed field and the regularity}
\label{sec:floquet-bloch}

In this section, we recall the definition and some properties of the Floquet-Bloch transform, and apply it to the solution of the original problem \eqref{eq:var}. Finally, we introduce the regularity result of the transformed field. For details we refer to \cite{Lechl2016,Zhang2017e}.

\subsection{The Floquet-Bloch transform}

We define the Floquet-Bloch transform of $\phi\in C_0^\infty(\Omega_H)$ as
\[
(\J\phi)(\alpha,x)=\sum_{j\in\Z}\phi(x_1+2\pi j,x_2)e^{-\i\alpha(x_1+ 2\pi j)},
\]
where $\alpha\in (-1/2,1/2]$ (called Floquet parameter) and $x\in\Omega_H^{2\pi}:=\Omega_H\cap [-\pi,\pi]\times\R$ (see Figure \ref{fig:sample}). As $\phi$ has a compact support, the transform is well-defined for $\alpha\in (-1/2,1/2]$ and $x\in\Omega_H^{2\pi}$. It is also easy to check that when $\alpha$ is fixed, $(\J\phi)(\alpha,\cdot)$ is $2\pi$-periodic in $x_1$, i.e.,
\[(\J\phi)\left(\alpha,\left(\begin{matrix}
x_1+2\pi \\ x_2                             
                             \end{matrix}\right)\right)
=(\J\phi)(\alpha,x).
\]
Moreover, $e^{\i\alpha x_1}w(\alpha,x)$ is $1$-periodic in $\alpha$ for fixed $x$.

To introduce properties of the Bloch transform, we define the space $H^m\left((-1/2,1/2];H^s(\Omega_H^{2\pi})\right)$ ($m\in\N$) equipped with the norm:
\[
\|\phi\|_{H^m\left((-1/2,1/2];H^s(\Omega_H^{2\pi})\right)}:=\left[\sum_{\ell=0}^m\int_{-1/2}^{1/2} \left\|\partial^\ell_\alpha\phi(\alpha,\cdot)\right\|^2_{H^s(\Omega^{2\pi}_H)}\right]^2.
\]
This definition is extended to any real number $r$ by interpolation and duality arguments. We also define the subspace $H^r\left((-1/2,1/2];H^s_\p(\Omega^\Lambda_H)\right)$ that contains functions which are $2\pi$-periodic with respect to $x_1$ with fixed  $\alpha$. We conclude our overview of the properties of the Floquet-Bloch transform with the following theorem.

\begin{theorem}\label{th:Bloch}
 The transform $\J$ is extended to an isomorphism between $H^s_r(\Omega_H)$ and $H^r\left((-1/2,1/2];H^s_\p(\Omega^\Lambda_H)\right)$ for any $s,\,r\in\R$.
 \[\left(\J^{-1} w\right)(x)=\int_{-1/2}^{1/2} w(\alpha,x)e^{\i \alpha x_1}\d\alpha,\quad x\in\Omega_H.
 \] 
 When $s=r=0$, $\J$ is an isometry with its inverse and $\J^{-1}=\J^*$.
\end{theorem}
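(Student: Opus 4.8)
The plan is to build the isomorphism up from the $L^2$ case by successively incorporating the interior smoothness (the index $s$) and the horizontal weight (the index $r$), and finally to reach all real exponents by interpolation and duality. Throughout I would work on the dense subspace $C_0^\infty(\Omega_H)$ and extend by continuity once the relevant norm identities are established. The observation driving everything is that, for $\phi\in C_0^\infty(\Omega_H)$, the auxiliary function $v(\alpha,x):=e^{\i\alpha x_1}(\J\phi)(\alpha,x)=\sum_{j\in\Z}\phi(x_1+2\pi j,x_2)\,e^{-2\pi\i\alpha j}$ is $1$-periodic in $\alpha$, and its Fourier coefficients with respect to the orthonormal system $\{e^{-2\pi\i\alpha j}\}_{j\in\Z}$ on $(-1/2,1/2]$ are exactly the cell translates $\phi(\cdot+2\pi j,\cdot)$ restricted to the reference cell $\Omega_H^{2\pi}$.

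First I would settle the case $s=r=0$. By Parseval's identity in $\alpha$ applied to $v$, and since $|\J\phi|=|v|$, one obtains for each fixed $x\in\Omega_H^{2\pi}$ that $\int_{-1/2}^{1/2}|(\J\phi)(\alpha,x)|^2\d\alpha=\sum_{j\in\Z}|\phi(x_1+2\pi j,x_2)|^2$; integrating over the reference cell and using that the translates $\{\Omega_H^{2\pi}+2\pi j\}_j$ tile $\Omega_H$ gives $\|\J\phi\|^2=\|\phi\|^2_{L^2(\Omega_H)}$. Hence $\J$ is an isometry on a dense subspace and extends to an isometry. A direct computation shows that the stated integral operator inverts $\J$ on $C_0^\infty(\Omega_H)$, so $\J$ is onto; a surjective isometry of Hilbert spaces is unitary, which yields $\J^{-1}=\J^\ast$.

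Next I would treat integer $s\geq0$ with $r=0$. Here $\J$ commutes with $\partial_{x_2}$ and intertwines $\partial_{x_1}$ with $\partial_{x_1}+\i\alpha$, the extra term arising from differentiating the exponential; since $\alpha$ ranges over the bounded interval $(-1/2,1/2]$, applying the $L^2$ identity to all derivatives $\partial^\beta\phi$ with $|\beta|\le s$ produces a norm equivalence, uniform in $\alpha$, establishing that $\J$ is a bounded isomorphism from $H^s(\Omega_H)$ onto $L^2\big((-1/2,1/2];H^s_\p(\Omega^\Lambda_H)\big)$. To bring in integer $r\geq0$ I would compare norms coefficient-wise: the periodic Sobolev norm in $\alpha$ satisfies $\|\J\phi\|^2_{H^r(\alpha;H^s_\p)}\simeq\sum_{j\in\Z}(1+j^2)^r\,\|\phi(\cdot+2\pi j,\cdot)\|^2_{H^s_\p(\Omega_H^{2\pi})}$, while on the cell $\Omega_H^{2\pi}+2\pi j$ the weight obeys $(1+x_1^2)^{r/2}\simeq(1+j^2)^{r/2}$ with constants independent of $j$; matching the two sums gives $\|\J\phi\|_{H^r(\alpha;H^s_\p)}\simeq\|\phi\|_{H^s_r(\Omega_H)}$ and hence the isomorphism. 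Finally, all real $s$ and $r\geq0$ follow by interpolation between the integer cases, and negative exponents by duality, using that $\J$ respects the dual pairings (the content of $\J^{-1}=\J^\ast$ from the $L^2$ step).

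The step I expect to be most delicate is the passage to nonzero weight $r$, where two points need care. The coefficient-wise comparison of the $x_1$-weight with $(1+j^2)^{r/2}$ is clean, but when one differentiates $\J\phi$ in $\alpha$ the factor $e^{\i\alpha x_1}$ relating $\J\phi$ and $v$ produces extra terms of the form $\i x_1\,e^{\i\alpha x_1}w$; these are harmless only because $x_1$ is bounded on the reference cell, and this boundedness must be exploited uniformly to keep the constants in $\simeq$ independent of $j$. The second delicate point is matching the interpolation and duality scales on the two sides so that fractional and negative $r$ genuinely correspond to the definition of $H^s_r(\Omega_H)$; here I would lean on the fact that both families of spaces arise by interpolation and duality from the integer cases, and that $\J$ has already been shown to commute, up to bounded $\alpha$-dependent lower-order terms, with the operators generating those scales.
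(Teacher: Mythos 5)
The paper itself does not prove this theorem --- it is recalled from the literature, with details deferred to \cite{Lechl2016,Zhang2017e} --- and your argument is essentially the standard proof found there: writing $e^{\i\alpha x_1}(\J\phi)(\alpha,x)=\sum_{j\in\Z}\phi(x_1+2\pi j,x_2)e^{-2\pi\i\alpha j}$ realizes $\J$, up to a unimodular factor, as a Fourier series in $\alpha$ whose coefficients are the cell translates, so Parseval gives the $L^2$ isometry and the inversion formula; the intertwining $\J\partial_{x_1}=(\partial_{x_1}+\i\alpha)\J$ handles integer $s$, the correspondence between $\partial_\alpha^\ell$ and powers of $j$ together with $(1+x_1^2)^{r/2}\simeq(1+j^2)^{r/2}$ on the $j$-th cell handles integer $r$, and interpolation and duality give the full scale, exactly as in the cited references. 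The one step you should tighten is surjectivity: verifying $\J^{-1}\J=\mathrm{id}$ on $C_0^\infty(\Omega_H)$ only yields injectivity and a left inverse, so you also need density of the range of $\J$ in the target space --- equivalently, the identity $\J\J^{-1}=\mathrm{id}$ on smooth functions $w$ with $e^{\i\alpha x_1}w(\alpha,x)$ $1$-periodic in $\alpha$ --- which follows from the same Fourier-series computation but must be stated.
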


\subsection{Bloch transformed field and regularity}

Following the process in \cite{Lechl2016}, we apply the Floquet-Bloch transform to the total field $u$, then $w(\alpha,x):=(\J u)(\alpha,x)$ satisfies the following variational equation with the test function $\psi(\alpha,x)=(\J\phi)(\alpha,x)$:
\begin{equation}
 \label{eq:var_bloch}
 \int_{-1/2}^{1/2} a_\alpha(w(\alpha,\cdot),\psi(\alpha,\cdot))\d\alpha=\int_{-1/2}^{1/2}\int_{\Gamma_H^{2\pi}}F(\alpha,\cdot)\overline{\psi}(\alpha,\cdot)\d s\d\alpha.
\end{equation}
where
\begin{eqnarray*}
 && a_\alpha(\xi,\eta)=\int_{\Omega^{2\pi}_H}\left[\grad \xi\cdot\grad\overline{\eta}-2\i\alpha\frac{\partial\xi}{\partial x_1}\overline{\eta}+(\alpha^2-k^2) \xi\overline{\eta}\right]\d x-\int_{\Gamma^{2\pi}_H}T^+_\alpha\left[\xi\big|_{\Gamma^{2\pi}_H}\right]\overline{\eta}\d s \, , \\
 && F(\alpha,x)=(\J f)(\alpha,x) \, .
\end{eqnarray*}
and $T^+_\alpha$ is the periodic Dirichlet-to-Neumann map with index $\alpha$ from $H^{1/2}_\p(\Gamma_H^{2\pi})$ to $H^{-1/2}_\p(\Gamma_H^{2\pi})$. It has the following form:
\[(T^+_\alpha\phi)(x_1)=\i\sum_{j\in\Z}\sqrt{k^2-(\alpha+j)^2}\widehat{\phi}(j)e^{\i j x_1} \qquad\text{where}\quad \phi(x_1)=\sum_{j\in\Z}\widehat{\phi}(j)e^{\i j x_1}.\]

From the equivalence between \eqref{eq:var} and \eqref{eq:var_bloch}, we have the following results. For proofs we refer to \cite{Lechl2016,Lechl2016a,Lechl2017}.

\begin{theorem}
 Given $f\in H_r^{-1/2}(\Gamma_H)$ for $|r|<1$, the variational problem \eqref{eq:var_bloch} is uniquely solvable in $H^r\left((-1/2,1/2];\widetilde{H}^1_\p(\Omega_H^{2\pi})\right)$. Moreover,
 \begin{enumerate}
  \item when $f\in H_r^{1/2}(\Gamma_H)$ and $\zeta\in C^{2,1}$, $w\in H^r\left((-1/2,1/2];\widetilde{H}^2_\p(\Omega^{2\pi}_H)\right)$ and $u\in H^2_r(\Omega_H)$;
  \item when $f\in H_r^{-1/2}(\Gamma_H)$ for $r\in (1/2,1)$, then $w\in L^2\left((-1/2,1/2];\widetilde{H}^1_\p(\Omega_H^{2\pi})\right)$ equivalently satisfies
  \begin{equation}
   \label{eq:var_single}
   a_\alpha(w(\alpha,\cdot),\phi)=\int_{\Gamma^{2\pi}_H}F(\alpha,\cdot)\overline{\phi}\d s
  \end{equation}
for any $\alpha\in (-1/2,1/2]$ and $\phi\in\widetilde{H}^1_\p(\Omega^{2\pi}_H)$.
 \end{enumerate}
\end{theorem}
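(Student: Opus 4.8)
The plan is to deduce everything from three inputs already available: the isomorphism property of $\J$ (Theorem~\ref{th:Bloch}), the well-posedness of the untransformed problem \eqref{eq:var} (the theorem of \cite{Chand2010} stated above), and the asserted equivalence of \eqref{eq:var} and \eqref{eq:var_bloch}. For unique solvability I would argue transfer of the solution operator. Given $f\in H^{-1/2}_r(\Gamma_H)$ with $|r|<1$, \cite{Chand2010} yields a unique $u\in\widetilde{H}^1_r(\Omega_H)$ solving \eqref{eq:var}. Since $\Gamma$ is $2\pi$-periodic, the transform $\J\colon H^1_r(\Omega_H)\to H^r\left((-1/2,1/2];H^1_\p(\Omega_H^{2\pi})\right)$ restricts to an isomorphism of the Dirichlet subspaces, so $w:=\J u$ lies in $H^r\left((-1/2,1/2];\widetilde{H}^1_\p(\Omega_H^{2\pi})\right)$ and, by the equivalence of the two formulations, solves \eqref{eq:var_bloch}. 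Uniqueness transfers the same way: any solution $w$ of \eqref{eq:var_bloch} has $\J^{-1}w$ solving the uniquely solvable problem \eqref{eq:var}, and $\J^{-1}$ is injective. Thus the solution operator of \eqref{eq:var_bloch} is exactly $\J$ composed with that of \eqref{eq:var}.

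For statement~1 I would first upgrade the regularity of the untransformed solution and then push it through $\J$. With $\zeta\in C^{2,1}$ the boundary $\Gamma$ is smooth enough for the $H^2$ shift theorem under the Dirichlet condition, and $f\in H^{1/2}_r$ supplies a Neumann datum one order more regular than before. Interior and boundary elliptic regularity for $\Delta u+k^2u=0$ then gives $u\in H^2_r(\Omega_H)$, which is the first claim; the horizontal weight enters only through $x_1$ and can be carried through the local estimates up to lower-order terms controlled by the global $H^1_r$ bound. Applying Theorem~\ref{th:Bloch} now with $s=2$ immediately yields $w=\J u\in H^r\left((-1/2,1/2];\widetilde{H}^2_\p(\Omega_H^{2\pi})\right)$, so the $\alpha$-regularity is free and no separate fibrewise argument is needed.

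For statement~2 the point is that $r>1/2$ lets us pass from the single integrated identity \eqref{eq:var_bloch} to the fibrewise identity \eqref{eq:var_single} for \emph{every} $\alpha$. The one-dimensional Sobolev embedding $H^r\left((-1/2,1/2];X\right)\hookrightarrow C\left([-1/2,1/2];X\right)$, valid precisely for $r>1/2$, shows that $w$ has a representative continuous in $\alpha$ with values in $\widetilde{H}^1_\p$, so $w(\alpha,\cdot)$ and hence the left side of \eqref{eq:var_single} are well defined pointwise. To obtain the equivalence I would insert test functions of product form $\psi(\alpha,x)=\beta(\alpha)\,\phi(x)$, with $\beta$ smooth and $\phi\in\widetilde{H}^1_\p$, which are dense among admissible test functions; the fundamental lemma of the calculus of variations forces $a_\alpha(w(\alpha,\cdot),\phi)-\int_{\Gamma^{2\pi}_H}F(\alpha,\cdot)\overline{\phi}\d s$ to vanish for a.e.\ $\alpha$, and continuity of $\alpha\mapsto a_\alpha(w(\alpha,\cdot),\phi)$ (using continuity of $w$ together with the continuous dependence of the coefficients of $a_\alpha$ on $\alpha$, including the branch $\sqrt{k^2-(\alpha+j)^2}$ of $T^+_\alpha$) upgrades this to all $\alpha$. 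Conversely, integrating \eqref{eq:var_single} against $\beta$ recovers \eqref{eq:var_bloch}.

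I expect the main obstacle to be the weighted $H^2$ regularity in statement~1, because the standard local boundary-regularity argument on $\Gamma_H$ must be adapted to the nonlocal Dirichlet-to-Neumann operator $T^+$: one has to verify that this first-order pseudodifferential operator maps $H^{1/2}_r(\Gamma_H)$ boundedly to $H^{-1/2}_r(\Gamma_H)$ and that the associated Neumann problem still enjoys the $H^2$ shift in the horizontally weighted scale simultaneously. A secondary delicate point is the sharpness of the threshold $r>1/2$ in statement~2: below it the continuous-in-$\alpha$ representative need not exist, so the fibrewise formulation could only be asserted almost everywhere, and the branch point of $\sqrt{k^2-(\alpha+j)^2}$ at the Rayleigh values $\alpha+j=\pm k$ is exactly where differentiability (though not continuity) of $a_\alpha$ fails.
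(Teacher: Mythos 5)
You should first note that the paper does not actually prove this theorem: it states that the results follow ``from the equivalence between \eqref{eq:var} and \eqref{eq:var_bloch}'' and delegates the proofs to \cite{Lechl2016,Lechl2016a,Lechl2017}, so your proposal can only be compared against that implied strategy. Your overall plan --- transfer solvability through the isomorphism of Theorem~\ref{th:Bloch} and the well-posedness result of \cite{Chand2010}, then handle part 2 via the vector-valued Sobolev embedding of $H^r$ into $C^0$ for $r>1/2$, product test functions $\beta(\alpha)\phi(x)$, the fundamental lemma, and a continuity upgrade from a.e.\ to all $\alpha$ --- is exactly this transfer strategy, and your part 2 argument is sound, including the correct observation that $\alpha\mapsto T^+_\alpha$ remains (H\"older) continuous across the Rayleigh set $E$ even though analyticity fails there. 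The genuine difference lies in part 1. You propose to prove $u\in H^2_r(\Omega_H)$ \emph{first}, by a shift theorem in the horizontally weighted spaces on the unbounded strip with the nonlocal boundary condition $\partial u/\partial x_2 - T^+u = f$ on $\Gamma_H$, and then obtain $w\in H^r\left((-1/2,1/2];\widetilde{H}^2_\p(\Omega_H^{2\pi})\right)$ for free from Theorem~\ref{th:Bloch} with $s=2$. The literature the paper relies on goes the other way around: $H^2$-regularity is established fiberwise for the quasi-periodic problems on the bounded cell $\Omega_H^{2\pi}$ (this is precisely the paper's later Theorem~\ref{th:err_FEM_alpha}, quoted from \cite{Lechl2016a}), where no weights and no unbounded domain appear and the constants are uniform in $\alpha$; the $H^r$-dependence on $\alpha$ is then proved separately, and $u\in H^2_r$ follows at the end by applying $\J^{-1}$. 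Each direction buys something: yours makes the $\alpha$-regularity trivial, but the step you yourself flag as the main obstacle --- boundary regularity up to $\Gamma_H$ for the pseudodifferential condition $\partial_2 - T^+$ carried out uniformly in the horizontal weight --- is exactly the part you leave unproven, so as written your part 1 is a sketch rather than a proof (the step is true, since $\partial_2 - T^+$ satisfies the Lopatinskii condition, but verifying it in the weighted scale is real work); the fiberwise route replaces that by standard bounded-domain elliptic regularity, at the price of tracking the $\alpha$-dependence, including the square-root singularities at $E$, by hand. If you want to complete your write-up without reproving weighted unbounded-domain regularity, switching part 1 to the fiberwise argument and invoking Theorem~\ref{th:Bloch} only at the last step is the more economical path.
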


{In particular for numerical applications, it is important to have a more detailed understanding of the regularity properties of $w(\alpha,x)$ with respect to the Floquet parameter $\alpha$.} Before the study of these properties, we first introduce the following notations and spaces. For any fixed positive wavenumber $k$, let
\[\underline{k}:=\min\{|n-k|:\,n\in\Z\}.\]
From $1$-periodicity of $\EE^{\i \alpha x_1}w(\alpha,x)$ with respect to $\alpha$, the inverse Bloch transform (see Theorem \ref{th:Bloch}) is written equivalently as
 \[\left(\J^{-1} w\right)(x)=\int_{-\underline{k}}^{1-\underline{k}} w(\alpha,x) \, \EE^{\i \alpha x_1} \, \d\alpha,\quad x\in\Omega_H.
 \] 
Let 
\[
  E:=\left\{\alpha\in[-\underline{k},1-\underline{k}]:\,|n-\alpha|=k\text{ for some }n\in\Z\right\} .
\]
Then. from direct calculation,
\[
 E=\begin{cases}
    \{-\underline{k},\,1-\underline{k}\},\quad\text{ when }k=n/2\text{ for some }n\in\N_+;\\
    \{-\underline{k},\,\underline{k},\,1-\underline{k}\},\quad\text{ otherwise.}
   \end{cases}
\]
This implies that $E$ contains the edge of $[-\underline{k},1-\underline{k}]$, and may also contain a point in the interior this interval.

The formulation of the regularity properties of the Bloch transformed field requires some appropriate function spaces. Let $\mathcal{I}\subset\R$ denote a bounded open interval, ${D}\subset\R^2$ a bounded domain, and $S({D})$ a Sobolev space independent of $\alpha$ of functions defined in ${D}$. First, define a space of functions that depend analytically on $\alpha$:
\[
\begin{aligned}
 C^\omega(\mathcal{I},S({D})):=&\Bigg\{\phi\in\mathcal{D}'(\mathcal{I}\times{D})):\,\forall\alpha_0\in\mathcal{I},\,\exists\,\delta>0,\,s.t.,\,\forall\alpha\in(\alpha_0-\delta,\alpha_0+\delta)\cap\mathcal{I},\Bigg.\\
& \left.\,\exists C>0,\,\phi_n\in S(D),\,s.t., \phi(\alpha,x)=\sum_{n=0}^\infty(\alpha-\alpha_0)^n \phi_n(x),\,\|\phi_n\|_{S(D)}\leq C^n\right\}.
\end{aligned}
\]
Also, let the subspace of functions that are $C^n$-continuous with respect to $\alpha$ be defined as:
\[
 \begin{aligned}
  C^n(\mathcal{I},S(D)):=&\Bigg\{\phi\in\mathcal{D}'(\mathcal{I}\times D):\,\forall\alpha\in\mathcal{I},\,j=0,1,\dots,n,\,\frac{\partial^j f(\alpha,\cdot)}{\partial\alpha^j}\in S(D),\Bigg.\\
  &\left.\text{moreover},\,\left\|\frac{\partial^j f(\alpha,\cdot)}{\partial \alpha^j}\right\|_{S(D)}\text{ is uniformly bounded for }\alpha\in\mathcal{I}\right\}.
 \end{aligned}
\]
The regularity of the Bloch transformed field can be characterized through two properties that we here formulate for a function $\phi\in C^0(\mathcal{I};S(D))$:
\begin{enumerate}
 \item For any subinterval $\mathcal{I}_0\subset\mathcal{I}\setminus E$, $\phi\in C^\omega(\mathcal{I}_0;S(D))$.
 \item For any $\alpha_0\in\mathcal{I}\cap E$, there is a sufficiently small $\delta>0$ and a pair $\phi_1,\phi_2\in C^\omega(\mathcal{I}_0;S(D))$ such that
 \[\phi(\alpha,\cdot)=\phi_1(\alpha,\cdot)+\sqrt{\alpha-\alpha_0}\,\phi_2(\alpha,\cdot),\]
 where $\mathcal{I}_0=(\alpha_0-\delta,\alpha_0+\delta)\cap\mathcal{I}$.
\end{enumerate}
The space of functions that satisfies both these properties will be denoted as 
\[\mathcal{A}^\omega(\mathcal{I};S(D);E):=\left\{\phi\in C^0(\mathcal{I};S(D)):\, \phi \text{ satisfies condition 1 and 2}\right\}.\]

With the help of all these definitions, the regularity of the Bloch transformed  field $w(\alpha,x)$ can now be stated in the following theorem. For a proof we refer to Theorem 16 in \cite{Zhang2017e}.

\begin{theorem}
 \label{th:reg_bloch}
 Given any $f\in H^{-1/2}_r(\Gamma_H)$  such that $\J f=F\in\mathcal{A}^\omega\left((-\underline{k},1-\underline{k}];H^{-1/2}_\p(\Gamma_H^{2\pi});E\right)$ where $r\in(1/2,1)$, then the transformed solution $\J u=w\in \mathcal{A}^\omega\left((-\underline{k},1-\underline{k}];\widetilde{H}^{1}_\p(\Omega_H^{2\pi});E\right)$. Moreover, if $f\in H^{1/2}_r(\Gamma_H)$ and $\zeta\in C^{2,1}(\R)$, $w\in \mathcal{A}^\omega\left((-\underline{k},1-\underline{k}];\widetilde{H}^{2}_\p(\Omega_H^{2\pi});E\right)$.
\end{theorem}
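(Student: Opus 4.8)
The plan is to exploit the fact that the only non-analytic dependence of the quasi-periodic problem \eqref{eq:var_single} on the Floquet parameter $\alpha$ enters through the Dirichlet-to-Neumann operator $T^+_\alpha$, and more precisely through the square roots $\sqrt{k^2-(\alpha+j)^2}$. Each such root is an analytic function of $\alpha$ as long as its radicand stays away from zero, and it develops a square-root branch point exactly where $(\alpha+j)^2=k^2$, i.e. precisely at the points collected in $E$. Writing $A_\alpha\colon\widetilde{H}^1_\p(\Omega_H^{2\pi})\to(\widetilde{H}^1_\p(\Omega_H^{2\pi}))'$ for the bounded operator induced by the sesquilinear form $a_\alpha$, the identity \eqref{eq:var_single} reads $A_\alpha\,w(\alpha,\cdot)=G(\alpha,\cdot)$, where $G(\alpha,\cdot)$ is the functional $\phi\mapsto\int_{\Gamma^{2\pi}_H}F(\alpha,\cdot)\overline{\phi}\,\mathrm{d}s$, obtained from $F$ through an $\alpha$-independent bounded trace pairing that preserves both the analytic and the square-root structure. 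I would then verify the two defining properties of $\mathcal{A}^\omega$ for $w$ separately, handling the coefficient operator and its inverse by analytic perturbation theory.

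For condition~1 I would fix a subinterval $\mathcal{I}_0\subset\mathcal{I}\setminus E$. On $\mathcal{I}_0$ every radicand $k^2-(\alpha+j)^2$ is bounded away from $0$ uniformly in $j$, so $\alpha\mapsto T^+_\alpha$, and hence $\alpha\mapsto A_\alpha$, extends to an analytic operator-valued map on a complex neighbourhood of $\mathcal{I}_0$; the remaining coefficients of $a_\alpha$ are polynomial in $\alpha$. The well-posedness theorem stated above guarantees that $A_\alpha$ is boundedly invertible for real $\alpha$, and a standard Neumann-series / analytic-inverse argument then shows that $\alpha\mapsto A_\alpha^{-1}$ is analytic wherever $A_\alpha$ is invertible. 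Since $G(\alpha,\cdot)$ is analytic by the hypothesis $F\in\mathcal{A}^\omega$, composing yields $w(\alpha,\cdot)=A_\alpha^{-1}G(\alpha,\cdot)\in C^\omega(\mathcal{I}_0;\widetilde{H}^1_\p(\Omega_H^{2\pi}))$.

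The core of the argument is condition~2 near a point $\alpha_0\in E$. For each index $j$ with $(\alpha_0+j)^2=k^2$ I would factor the radicand as $k^2-(\alpha+j)^2=(k-\alpha-j)(k+\alpha+j)$; exactly one linear factor vanishes simply at $\alpha_0$, so $\sqrt{k^2-(\alpha+j)^2}=\sqrt{\alpha-\alpha_0}\,g_j(\alpha)$ with $g_j$ analytic and nonzero near $\alpha_0$ (absorbing the sign and constant). Consequently $T^+_\alpha=T_0(\alpha)+\sqrt{\alpha-\alpha_0}\,T_1(\alpha)$ and $A_\alpha=B(\alpha)+\sqrt{\alpha-\alpha_0}\,C(\alpha)$ with $B,C$ analytic operator families near $\alpha_0$. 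Introducing the change of variable $\tau=\sqrt{\alpha-\alpha_0}$, i.e. $\alpha=\alpha_0+\tau^2$, the map $\tilde A(\tau):=B(\alpha_0+\tau^2)+\tau\,C(\alpha_0+\tau^2)$ becomes analytic in $\tau$ on a complex disc around $0$, and the data $\tilde G(\tau):=G(\alpha_0+\tau^2,\cdot)$ is analytic in $\tau$ because $F$ itself satisfies condition~2. Since $\tilde A(0)=B(\alpha_0)$ is boundedly invertible, $\tilde A(\tau)^{-1}$ is analytic for small $\tau$, whence $\tilde w(\tau)=\tilde A(\tau)^{-1}\tilde G(\tau)$ is analytic in $\tau$. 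Splitting its Taylor series into even and odd powers, $\tilde w(\tau)=\sum_m w_{2m}\tau^{2m}+\tau\sum_m w_{2m+1}\tau^{2m}$, and resumming in $\tau^2=\alpha-\alpha_0$ gives $w(\alpha,\cdot)=w_1(\alpha,\cdot)+\sqrt{\alpha-\alpha_0}\,w_2(\alpha,\cdot)$ with $w_1,w_2\in C^\omega$, which is exactly condition~2.

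For the $H^2$ statement I would rerun both steps with $\widetilde{H}^1_\p$ replaced by $\widetilde{H}^2_\p$: when $\zeta\in C^{2,1}(\R)$ and the data lie in $H^{1/2}_r(\Gamma_H)$, elliptic regularity for the quasi-periodic problem lifts the spatial regularity of $w(\alpha,\cdot)$, and the corresponding a priori estimates are uniform in $\alpha$ on compact sets, so both the analytic-perturbation argument away from $E$ and the $\tau$-substitution argument near $E$ carry over verbatim in the stronger topology. The main obstacle I anticipate is the invertibility of the limiting operator $\tilde A(0)=B(\alpha_0)$ at the anomaly: this amounts to well-posedness of the quasi-periodic problem exactly at a Rayleigh frequency, where the generic Fredholm argument degenerates, and it must therefore be established separately from the radiation condition and the specific structure of $T^+_\alpha$ at $\alpha_0$. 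A secondary technical point is to justify the uniform-in-$j$ analytic continuation and boundedness of the family $\alpha\mapsto T^+_\alpha$, so that $\alpha\mapsto A_\alpha$ is genuinely analytic with values in the bounded operators rather than merely strongly analytic.
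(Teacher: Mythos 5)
The paper itself contains no proof of Theorem \ref{th:reg_bloch} (it defers to Theorem 16 of \cite{Zhang2017e}), and your proposal reconstructs essentially that argument: split $T^+_\alpha$, and hence the variational operator, into an analytic family plus $\sqrt{\alpha-\alpha_0}$ times an analytic (finite-rank) family near each $\alpha_0\in E$, substitute $\tau=\sqrt{\alpha-\alpha_0}$, apply analytic operator perturbation theory, and separate even and odd powers of $\tau$; away from $E$ the Neumann-series argument gives analyticity directly. The one obstacle you flag, invertibility of $\tilde{A}(0)=\mathcal{B}_{\alpha_0}$ at a Wood anomaly, is real but is exactly what the results already quoted in the paper supply: the unique solvability of \eqref{eq:var_single} for \emph{every} $\alpha$ stated in Section \ref{sec:floquet-bloch}, equivalently the uniform bound $\|\mathcal{B}_\alpha^{-1}\|\leq M$ from \cite{Chand2005} used in Section \ref{sec:extend}, which extends from $A_m$ to its closure since $\alpha\mapsto\mathcal{B}_\alpha$ is norm-continuous up to the anomaly (the singular symbol $\sqrt{k^2-(\alpha+j_0)^2}$ tends to $0$ there), so your argument closes.
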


From Theorem \ref{th:reg_bloch}, the transformed field $w(\alpha,\cdot)$ has  only a finite number of square-root singularities. As $w(\alpha,\cdot)$ can be readily computed by well-established methods, the only difficulty is to approximate the inverse Bloch transform. Note that although in \cite{Zhang2017e}, one of the authors has proposed a highly efficient numerical method for the approximation, it is extremely difficult to extend this approach to scattering problems with bi-periodic structures in three dimensional space. With the ultimate goal of such an extension in mind, we introduce a nonuniform mesh method for the numerical approximation below. The estimation of quadrature errors arising in this method relies on bounds of analytic extensions of $\varphi_1$ and $\varphi_2$ in property 2 in the complex plane with respect to $\alpha$. Thus, before the introduction to the adaptive method, we have to investigate the extension of the solutions to a neighbourhood of $(-\underline{k},1-\underline{k}]$ in $\C$.

\section{Extension to complex quasi-periodicities}
\label{sec:extend}

The convergence analysis of our method of inversion for the Floquet-Bloch transform relies on estimates for analytic extensions with respect to $\alpha$ of the solution $w(\alpha, \cdot)$ of the quasi-periodic problem. It is the goal of this section to characterize complex neighborhoods of the real axis to which $w(\alpha,\cdot)$ may be extended analytically and to provide estimates for these extensions. Our approach is, first, to precisely define analytic extensions of the variational formulation and of the corresponding operators and, secondly, to estimate the difference between these operators and their counterparts for real $\alpha$ for small deviations from the real axis. From these results, standard perturbation theory will yield analyticity of the solution $w(\alpha,\cdot)$ as well as the required bounds.

To simplify our estimates, let us slightly modify the spaces. Denote by $V_H^\p$ the space $\widetilde{H}^{1}_\p(\Omega_H^{2\pi})$ with the norm replaced by
\[
\|\phi\|_{V_H^\p}:=\left[\int_{\Omega^{2\pi}_H}\left[\left|\grad\phi\right|^2+k^2|\phi|^2\right]\d x\right]^{1/2}.
\]
Let the norm in  $H^s_\p(\Gamma^{2\pi}_H)$ be defined by
\[
\|\phi\|_{H^s_\p(\Gamma^{2\pi}_H)}=\left[\sum_{j\in\Z}(k^2+j^2)^s\big|\widehat{\phi}_j\big|^2\right]^{1/2}.
\]
As the sesquilinear form $a_\alpha(\cdot,\cdot)$ is well defined in $V_H^\p\times V_H^\p$, from Riesz's Lemma, there is a $\mathcal{B}_\alpha:\,V_H^\p\rightarrow \left(V_H^\p\right)^*$ such that
\[
a_\alpha(\phi,\psi)=\left<\mathcal{B}_\alpha\phi,\psi\right>,
\]
where $\phi,\psi\in V_H^\p$ and $\left<\cdot,\cdot\right>$ denotes the extension of the $L^2(\Omega^{2\pi}_H)$ inner product to the $\left(V_H^\p\right)^*$-$V_H^\p$ duality.
Moreover, let $F(\alpha,\cdot) \in H^{-1/2}_\p(\Gamma^{2\pi}_H)$.

Let $\delta$ denote a small complex number, and $\mathcal{D}_{\delta,\alpha}$ the perturbation of $\mathcal{B}_\alpha$ obtained from replacing $\alpha$ by $\alpha + \i \delta$, i.e.
\begin{equation}
 \label{eq:D_plus_B}
 \left\langle \left( \mathcal{B}_\alpha + \mathcal{D}_{\delta,\alpha} \right) v, \phi \right\rangle 
 = \int_{\Omega^{2\pi}_H} \left[ \grad v \cdot \grad \overline{\phi} - 2 \i \, ( \alpha + \i \delta) \, \frac{\partial v}{\partial x_1} \, \overline{\phi} + ( (\alpha + \i  \delta)^2 - k^2) \, v \, \overline{\phi} \right] \d x - \int_{\Gamma^{2\pi}_H} T^+_{\alpha+\i\delta} v \, \overline{\phi} \, \d x \, .
\end{equation}
For the moment, we consider $T^+_{\alpha+\i\delta}$ as a formal symbol only, a precise definition of this operator will be given below. A direct calculation shows
\begin{equation}
 \label{eq:D_delta_alpha}
 \left\langle \mathcal{D}_{\delta,\alpha} v , \phi \right\rangle
 = \int_{\Omega^{2\pi}_H} \left[ 2 \delta \, \frac{\partial v}{\partial x_1} \,  \overline{\phi} + (2\i\alpha\delta - \delta^2) \, v \, \overline{\phi} \right] \d x - \int_{\Gamma^{2\pi}_H} \left[ T^+_{\alpha+\i\delta} - T^+_{\alpha} \right] v \, \overline{\phi} \, \d s \, .
\end{equation}
Obviously, the first integral depends analytically on $\delta$ in all of $\C$. Thus we only need to investigate the analytic extension of the operator $T^+_\alpha$ which involves a countable number of functions with square-root singularities. As ${T}^+_\alpha$ is real analytic in $(-\underline{k},1-\underline{k}]$ except for the finite set $E$, we extend the operator also analytically in the neighbourhood of $(-\underline{k},1-\underline{k}]$ except for a finite number of vertical lines $\{\alpha_0\} \times \R$, where $\alpha_0 \in E$. 

To this end, we redefine the square root operator ``$\sqrt{\quad}$'' as follows.

\begin{definition}
\label{def:sr}
 For any $z\in\C\setminus\{0\}$, there is a unique representation such that
 \[
  z=r \EE^{\i\theta}, \quad r = |z| > 0 \, , \quad \theta \in \left( -\frac{\pi}{2}, \frac{3\pi}{2} \right] .
 \]
Define $\sqrt{z} = \sqrt{r} \EE^{\i\theta/2}$, where $\sqrt{r}$ denotes the usual square root for a positive real number. Moreover, when $z=0$, $\sqrt{z}=0$.
\end{definition}

Via Definition \ref{def:sr}, the square root function is analytically extended to the complex plane except for the  negative imaginary axis. Thus for each term in the formal expression for $T^+_{\alpha+\i\delta}$, the map $\delta \mapsto \sqrt{k^2 - (\alpha + j + \i \delta)^2}$ is real analytic in $\R$  when $|\alpha+j|\neq k$.
 
Consider now the analytic extension of the terms in the definition of the operator ${T}^+_\alpha$. Let
\[
  A_1=(-\k,\k), \qquad A_2=(\k,1-\k) \, .
\]
Note that if $\k=0$, $A_1=\emptyset$ while when $\k=1/2$, $A_2=\emptyset$; while neither is empty otherwise.  The observations above show that for each $j \in \Z$, the function $\delta \mapsto  \sqrt{ k^2 - (\alpha + j + \i\delta )^2}$ is analytic in the strips $A_m +\i \R$, $m=1$, $2$. We will show in Theorem \ref{th:pert_dtn} below that the series over all these terms indeed converges and bound its difference from $T^+_\alpha$. Before we can establish this result, however, we require two technical estimates for these square roots terms.

\begin{lemma}
 For any $\alpha \in A_m$, $m = 1$, $2$, $\delta \in \R$ and $j \in \Z$,
 \begin{equation}
  \label{eq:inequ_2}
  \left| \sqrt{ (\alpha + j + \i\delta)^2 - k^2 } - \sqrt{ (\alpha + j)^2 - k^2 } \right|
  \leq \frac{ |\delta| }{2 \left| \sqrt{k^2 - (\alpha+j)^2 } \right| } \left( \max\{ k, |\alpha+j| \} + \frac{ |\delta|^2 }{ 4 \left| \alpha + j - k \right| } \right) .
 \end{equation}
\end{lemma}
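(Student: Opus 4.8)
The plan is to control the difference of the two square roots by integrating the derivative of the analytic branch along the vertical segment joining $\alpha+j$ to $\alpha+j+\i\delta$. Throughout I write $w:=\alpha+j$; since $\alpha\in A_m$ we have $|w|\neq k$, so both roots are nonzero and $|\sqrt{k^2-(\alpha+j)^2}|=\sqrt{|w^2-k^2|}$, which is exactly the quantity in the denominator of the bound. By Definition \ref{def:sr} the map $s\mapsto\sqrt{(w+\i s)^2-k^2}$ is analytic along the segment $s\in[0,\delta]$ as long as $(w+\i s)^2-k^2$ stays off the negative imaginary axis; because $|w|\neq k$ this holds for the relevant (small) deviations $\delta$, and there
\[
\sqrt{(w+\i\delta)^2-k^2}-\sqrt{w^2-k^2}=\int_0^\delta\frac{\i\,(w+\i s)}{\sqrt{(w+\i s)^2-k^2}}\, \d s .
\]
Taking absolute values and using $|w+\i s|=\sqrt{w^2+s^2}$ reduces the claim to estimating $\int_0^{|\delta|}\frac{\sqrt{w^2+s^2}}{|(w+\i s)^2-k^2|^{1/2}}\,\d s$.

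The first key ingredient is a lower bound for the denominator that is uniform in $s$: I would show $|(w+\i s)^2-k^2|\ge|w^2-k^2|$ for every real $s$. This follows from the elementary identity
\[
\bigl|(w+\i s)^2-k^2\bigr|^2-\bigl(w^2-k^2\bigr)^2=(w^2-s^2-k^2)^2+4w^2s^2-(w^2-k^2)^2=s^2\,(s^2+2w^2+2k^2)\ge0 .
\]
Inserting this bound pulls the factor $1/\sqrt{|w^2-k^2|}$ out of the integral and leaves $\int_0^{|\delta|}\sqrt{w^2+s^2}\,\d s$ to be estimated.

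The second ingredient produces the two summands in the bracket. I would split $\sqrt{w^2+s^2}$ into an $s$-independent part and a remainder via $\sqrt{w^2+s^2}\le\max\{k,|w|\}+s^2/(2\max\{k,|w|\})$ (valid for all $s$ since $\max\{k,|w|\}\ge|w|$ and $\max\{k,|w|\}\ge k>0$); the constant part integrates to the leading term $\max\{k,|w|\}\,|\delta|$, while the quadratic remainder integrates to a cubic-in-$\delta$ correction. To recover the precise correction denominator $|w-k|$ rather than a cruder quantity, I would retain the near-singular factor of the denominator, writing $|(w+\i s)^2-k^2|^{1/2}=\bigl((w-k)^2+s^2\bigr)^{1/4}\bigl((w+k)^2+s^2\bigr)^{1/4}$ and bounding only the benign factor $\bigl((w+k)^2+s^2\bigr)^{1/4}\ge|w+k|^{1/2}$, so that the growth of the numerator in $s$ is tempered by $\bigl((w-k)^2+s^2\bigr)^{-1/4}$ and the correction becomes controlled by $|w-k|^{-1}$.

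The main obstacle is obtaining the correction term with its sharp $|w-k|^{-1}$ dependence \emph{uniformly} across the two regimes $|w|<k$ (the evanescent case, where $\sqrt{w^2-k^2}$ is purely imaginary) and $|w|>k$ (the propagating case), and simultaneously over all $j\in\Z$. In particular $|w-k|$ can be small when $\alpha$ is near the excluded set $E$, so the denominator estimate must be kept sharp exactly there rather than absorbed into $\max\{k,|w|\}$; a naive lower bound $\bigl((w-k)^2+s^2\bigr)^{1/4}\ge|w-k|^{1/2}$ loses this and must be avoided. A secondary but genuine point is the validity of the contour representation itself: the branch prescribed by Definition \ref{def:sr} has its cut on the negative imaginary axis, and for $\delta$ of sign opposite to $w$ with $|\delta|>\sqrt{w^2-k^2}$ the segment would cross this cut; the estimate is therefore to be read for deviations $\delta$ small enough that no crossing occurs, i.e. in the regime in which the analytic extension is actually used, where the integrand remains single-valued and the displayed identity holds.
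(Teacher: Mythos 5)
Your route is genuinely different from the paper's, and its checkable steps are correct. The paper factorizes $(\alpha+j+\i\delta)^2-k^2=(\alpha+j+\i\delta-k)(\alpha+j+\i\delta+k)$, splits the difference of the two products by the triangle inequality, and then uses the algebraic identities $\sqrt{a+\i\delta}-\sqrt{a}=\i\delta/(\sqrt{a+\i\delta}+\sqrt{a})$, $|\sqrt{x+\i y}+\sqrt{x}|\geq 2|\sqrt{x}|$ and $|\sqrt{x+\i y}|\leq |x|^{1/2}\left(1+y^2/(4x^2)\right)$. You instead integrate the derivative $\i(w+\i s)/\sqrt{(w+\i s)^2-k^2}$, $w=\alpha+j$, along the vertical segment, and control the denominator by the uniform bound $|(w+\i s)^2-k^2|\geq |w^2-k^2|$; your verification via $|(w+\i s)^2-k^2|^2-(w^2-k^2)^2=s^2\,(s^2+2w^2+2k^2)\geq 0$ is correct. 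What your approach buys: it avoids the factorization $\sqrt{z^2-k^2}=\sqrt{z-k}\,\sqrt{z+k}$, which the paper uses tacitly and which, for the branch of Definition \ref{def:sr}, can fail by a sign. What the paper's approach buys: it is purely algebraic, with no contour that must stay off the branch cut.

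Two caveats, neither fatal. First, constants: your argument yields the main term $|\delta|\max\{k,|w|\}\,/\,|\sqrt{k^2-w^2}|$, i.e.\ \emph{twice} the main term stated in \eqref{eq:inequ_2}; but the paper's own chain produces exactly the same doubled term, since $|\sqrt{w-k}|/|\sqrt{w+k}|+|\sqrt{w+k}|/|\sqrt{w-k}|=\left(|w-k|+|w+k|\right)/|\sqrt{k^2-w^2}|=2\max\{k,|w|\}/|\sqrt{k^2-w^2}|$, so the stated factor $1/2$ is attained by neither argument and appears to be a slip in the paper. Your correction term comes out as $|\delta|^3/\left(6\max\{k,|w|\}\,|\sqrt{k^2-w^2}|\right)$ rather than $|\delta|^3/\left(8|w-k|\,|\sqrt{k^2-w^2}|\right)$, and your sketched refinement for recovering the exact $|w-k|^{-1}$ dependence does not obviously close; this is immaterial, however, since your form is in fact stronger precisely near the singular set where $|w-k|$ is small, and either version feeds into Theorem \ref{th:pert_dtn} with only generic constants changed. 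Second, the range of $\delta$: you restrict to segments that do not cross the cut, i.e.\ $|\delta|<\sqrt{w^2-k^2}$ when $|w|>k$ and $\delta w<0$, whereas the lemma asserts all $\delta\in\R$. This is a defect of the statement, not of your proof: across the cut the left-hand side jumps and the inequality actually fails (take $k=0.3$, $\alpha=0$, $j=1$, $\delta=-\sqrt{0.91}$: the left side is about $2.16$ while the right side is about $0.66$), and the paper's proof breaks there too, because the factorization identity it relies on changes sign at exactly that point. In the only place the lemma is used, Theorem \ref{theo:A_m_2_parabolas} enforces $|\delta|\leq\mathcal{C}\min\left\{\sqrt{\alpha-a_0}\,,\,\sqrt{a_1-\alpha}\right\}$ with $\mathcal{C}<\sigma$, so Lemma \ref{lemma:sqrt_estimate} gives $|\delta|<|\sqrt{k^2-(\alpha+j)^2}|$ for every $j$ and no crossing occurs; your proof is therefore valid exactly on the set where the lemma is true and actually needed.
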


\begin{proof}
  From Definition \ref{def:sr}, we have for $x \in \R \setminus \{ 0 \}$ and $y \in \R$ that
  \[
    \left| \sqrt{x + \i y} + \sqrt{x} \right|
    = | \sqrt{x} | \, \left| 1 + \sqrt{ 1 + \i \, \frac{y}{x} } \right|
    \geq 2 \, | \sqrt{x} | \, .
  \]
  Hence, by an elementary calculation, we obtain for $\alpha \in A_m$ that
  \begin{align*}
      \left| \sqrt{ (\alpha + j + \i \delta)^2 - k^2}\right. & \left.{} - \sqrt{ (\alpha+j)^2 - k^2} \right| \\[1ex]
      & \leq \left| \sqrt{ \alpha + j + \i \delta + k } - \sqrt{ \alpha + j + k } \right| \left| \sqrt{ \alpha + j + \i \delta - k } \right| \\
      & \quad {} + \left| \sqrt{ \alpha + j + k } \right| \left| \sqrt{ \alpha + j + \i\delta - k } - \sqrt{ \alpha + j - k } \right| \\[1ex]
      & = \frac{ |\delta| \left| \sqrt{ \alpha + j + \i\delta - k } \right| }{ \left| \sqrt{ \alpha + j + \i\delta + k } + \sqrt{ \alpha + j + k } \right| } + \frac{ |\delta| |\sqrt{ \alpha + j + k }| }{ \left| \sqrt{ \alpha + j + \i\delta - k } + \sqrt{ \alpha + j - k } \right|} \\[2ex]
      & \leq \frac{ |\delta| }{ 2 } \left( \frac{ | \sqrt{ \alpha + j + \i\delta - k } | }{ | \sqrt{ \alpha + j + k } | } + \frac{ | \sqrt{ \alpha + j + k } | }{ | \sqrt{ \alpha + j - k } | } \right). 
   \end{align*}
  We further estimate, again for $x \in \R \setminus \{ 0 \}$ and $y \in \R$, 
  \[
    \left| \sqrt{ x + \i y} \right|
    = ( x^2 + y^2 )^{1/4}
    \leq | x |^{1/2} \left( 1 + \frac{y^2}{4 x^2} \right)
  \]
  Thus
  \[
    \left| \sqrt{ \alpha + j + \i\delta - k }  \right|
    \leq  \left| \sqrt{ \alpha + j - k }  \right| + \frac{\delta^2}{4 \, | \alpha + j - k |^{3/2} } \, ,
  \]
  and the assertion follows.
\end{proof}

In the next step we further estimate the leading factor in \eqref{eq:inequ_2} by estimating from below $\left| \sqrt{ k^2 - (\alpha+j)^2 } \right|$ for $j \in \Z$, when $\alpha$ is fixed.

\begin{lemma}
 \label{lemma:sqrt_estimate}
 Let $\alpha \in A_m = (a_0,a_1)$ for $m=1$, $2$. Then 
 \[
   \min_{j \in \Z} \left| \sqrt{ k^2 - (\alpha+j)^2 } \right|
   \geq \sigma \, \min \left\{ \sqrt{ \alpha - a_0 } \, , \, \sqrt{ a_1 - \alpha } \right\}
 \]
 with $\sigma = 1$ if $k > 1/2$ and $\sigma = \sqrt{\k} > 0$ if $k \leq 1/2$.
\end{lemma}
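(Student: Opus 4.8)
The plan is to reduce the claim to a bound that is uniform in $j$. Since $\min_{j}c_{j}\ge c$ holds exactly when $c_{j}\ge c$ for every $j$, it suffices to show that for each fixed $j\in\Z$ and every $\alpha\in A_m$,
\[
 \left|\sqrt{k^2-(\alpha+j)^2}\right|
 =\sqrt{|\alpha+j-k|}\,\sqrt{|\alpha+j+k|}
 \ge\sigma\,\min\left\{\sqrt{\alpha-a_0},\sqrt{a_1-\alpha}\right\},
\]
the factorisation following from Definition \ref{def:sr} together with the identity $|\sqrt{z}|=\sqrt{|z|}$ for $z\in\R$. Thus the quantity to be bounded is the geometric mean of the distances from $\alpha$ to the two roots $k-j$ and $-k-j$ of the parabola $\alpha\mapsto k^2-(\alpha+j)^2$, while the right-hand side is governed by the distance of $\alpha$ to the endpoints of $A_m$.

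First I would classify the indices $j$ by the location of these two roots relative to $\overline{A_m}=[a_0,a_1]$. The union of all roots is $(\pm k+\Z)$, whose intersection with $[-\k,1-\k]$ equals $E$; as the open interval $A_m$ contains no point of $E$, for each $j$ either (i) one root coincides with $a_0$ or $a_1$ while the other lies outside the open interval, or (ii) both roots lie outside $[a_0,a_1]$. For a ``critical'' index of type (i), say with $\alpha+j-k$ vanishing at $a_1$, I would write $|\alpha+j-k|=a_1-\alpha$ and bound the surviving affine factor $|\alpha+j+k|$ from below on $A_m$; since the two roots are a fixed distance $2k$ apart, this factor is minimal at an endpoint. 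The resulting constant is the square of $\sigma$: it exceeds $1$ once the partner root is far enough from $[a_0,a_1]$ (the mechanism giving $\sigma=1$ for $k>1/2$), and it degenerates to $\k$ when the partner root sits at the opposite endpoint $a_0$, which is what produces $\sigma=\sqrt{\k}$ for $k\le1/2$. Collecting the contributions attached to the two endpoints (which may come from one or two indices) via the elementary inequality $\min\{A,B\}\ge\min\{a,b\}$ whenever $A\ge a$ and $B\ge b$ then settles these indices.

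The delicate part, and the step I expect to be the main obstacle, concerns the non-critical indices of type (ii), above all the single index whose vertex $-j$ falls inside or nearest to $A_m$: its roots never touch $[a_0,a_1]$, yet the corresponding term can be comparatively small in the interior of $A_m$, and it is precisely this term that pins down the largest admissible $\sigma$ when $k$ is close to $1/2$. For such an index I would exploit that $|k^2-(\alpha+j)^2|$ is a downward parabola while $\sigma^2\min\{\alpha-a_0,a_1-\alpha\}$ is a tent function vanishing at both endpoints; their difference is concave on each half of $A_m$, so the inequality reduces to its value at the midpoint $\alpha=(a_0+a_1)/2$, where the right-hand side is maximal and equals $\sigma^2(a_1-a_0)/2$. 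Comparing the value of the term at this midpoint with $\sigma^2(a_1-a_0)/2$, using the explicit endpoints of $A_1$ and $A_2$ and the relation between $k$ and $\k$, is exactly where the case distinction $k\gtrless1/2$ must be made; once every index satisfies the pointwise inequality, taking the minimum over $j$ yields the assertion.
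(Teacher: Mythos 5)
Your overall strategy is essentially the paper's own proof recast in geometric language: the paper writes $k = \hat{j} \pm \k$, $j = \hat{j}+n$, factors $|k^2-(\alpha+j)^2|$ into the same two affine factors you obtain from the roots $\pm k - j$, and your ``critical'' indices of type (i) correspond exactly to its exceptional pairs $(n,\hat{j})$. Your midpoint reduction for the vertex-inside index is also sound: on each half of $A_m$ the difference (concave parabola minus linear tent) is concave, and it is nonnegative at $a_0$, $a_1$ (where the tent vanishes), so only the midpoint value needs checking. One minor flaw in your type (i) mechanism: when the partner root sits at the \emph{opposite} endpoint, the surviving factor is not bounded below by a positive constant on $A_m$ (its infimum is $0$); what actually produces the factor $\k$ there is $(\alpha-a_0)(a_1-\alpha) \ge \frac{a_1-a_0}{2}\,\min\{\alpha-a_0,\,a_1-\alpha\}$, i.e.\ the \emph{larger} factor is at least half the interval length.

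The genuine obstruction is that the midpoint test you propose does not confirm the stated constants --- it refutes them. Take $k = 1-\k$ with $\k \in \bigl(\tfrac{3-\sqrt{5}}{2}, \tfrac12\bigr)$, i.e.\ $\tfrac12 < k < \tfrac{\sqrt{5}-1}{2}$, and consider $m=1$, $j=0$: the roots $\pm k$ lie strictly outside $[-\k,\k]$, so this is precisely your delicate vertex-inside index, and at the midpoint $\alpha=0$ the parabola equals $k^2 = (1-\k)^2$ while the tent with $\sigma=1$ equals $\k$; but $(1-\k)^2 < \k$ on this entire range. Concretely, for $k=0.6$, $\alpha=0$ one has $\min_{j\in\Z}\bigl|\sqrt{k^2-j^2}\bigr| = 0.6 < \sqrt{0.4} = \min\bigl\{\sqrt{\alpha-a_0},\sqrt{a_1-\alpha}\bigr\}$. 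So the pointwise inequality with $\sigma=1$ fails, your concluding sentence (``once every index satisfies the pointwise inequality\dots'') cannot be realized, and in fact the lemma as stated is false for $\tfrac12 < k < \tfrac{\sqrt5-1}{2}$. This is not a defect of your framework but of the statement: the paper verifies in detail only $k=\hat{j}+\k$, where simultaneous smallness of both factors forces $\hat{j}=0$ and hence $k\le 1/2$, and dismisses $k=\hat{j}-\k$ with ``obvious sign changes''; there, however, the index $j=0$ (i.e.\ $n=-1$, $\hat{j}=1$) is exceptional even though $k>1/2$. Your own midpoint computation shows what the constant must be in that regime: at best $\sigma = k/\sqrt{\k}$ for $A_1$ (and, e.g., $\sigma=\sqrt{\k}$ still suffices), which is all that the subsequent perturbation arguments of the paper actually require.
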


\begin{proof}
 Note first, that all factors occurring on the right hand side of the asserted lower bound are less than or equal to 1.
 
 Suppose that $k = \hat{j} + \k$, $\hat{j} \in \Z_{\geq 0}$ and write $j = \hat{j} + n$, $n \in \Z$. Then
 \begin{align*}
     \left| k^2 - ( \alpha + j )^2 \right|
      = \left| ( \hat{j} + \k )^2 - ( \alpha + \hat{j} + n )^2 \right|
     = \big| \k - \alpha - n \big| \, \big| \k + \alpha + n + 2 \hat{j} \big| .
 \end{align*}
 First, let $\alpha \in A_1 = (-\k, \k)$. Then, 
 \begin{align*}
   | \k - \alpha - n | & \geq \min \{ | \k - \alpha | \, , | \k - \alpha - 1 |
    \geq \min \{ | \k - \alpha | \, , | \k + \alpha | \} \, , \\
   | \k + \alpha + n + 2 \hat{j} | & \geq \min \{ | \k +  \alpha | \, , | \k + \alpha - 1 | \}
   \geq \min \{ | \k +  \alpha | \, , | \k - \alpha  | \} \, ,
 \end{align*}
 as well as
 \[
   | \k - \alpha - n | \geq 1 \, , \quad n \in \Z \setminus \{ 0, \, 1 \} \, ,
   \qquad \text{and} \qquad
   | ( \k + \alpha + n + 2 \hat{j} ) | \geq 1 \, , \quad n + 2 \hat{j} \in \Z \setminus \{ -1, \, 0 \} \, .
 \]
 This proves the assertion for $\alpha \in A_1$ unless $n = \hat{j} = 0$. In this case we can obviously estimate
 \[
   \left| \k - \alpha \right| \, \left| \k + \alpha \right| 
   \geq \k \, \min \{ | \k - \alpha | \, , \, | \k + \alpha | \} \, .
 \]
 Taking the square root gives the estimate for $\alpha \in A_1$.
 
 Now let $\alpha \in A_2 = (\k, 1 - \k)$. In this case, 
 \begin{align*}
   | \k - \alpha - n | & \geq \min \{ | \k - \alpha | \, , | \k - \alpha + 1 | \} 
   \geq \min \{ | \k - \alpha | \, , | 1 - \k - \alpha | \} \, , \\
   | \k + \alpha + n + 2 \hat{j} | & \geq \min \{ | \k +  \alpha - 1 | \, , | \k + \alpha | \} 
   \geq \min \{ | \k +  \alpha - 1 | \, , | \k - \alpha | \} \, ,
 \end{align*}
 as well as
 \[
   | \k - \alpha - n | \geq 1 \, , \quad n \in \Z \setminus \{ -1, \, 0 \} \, ,
   \qquad \text{and} \qquad
   | ( \k + \alpha + n + 2 \hat{j} ) | \geq 1 \, , \quad n + 2 \hat{j} \in \Z \setminus \{ -1, \, 0 \} \, .
 \]
 The assertion is proven for $\alpha \in A_2$ unless $\hat{j} = 0$ and $n \in \{ -1 \, , \, 0 \}$. In these exceptional cases we have
 \[
   \left| \k - \alpha \right| \, \left| \k + \alpha \right| 
   \geq 2 \k \, | \k - \alpha | \, , \qquad
   \left| \k - \alpha + 1 \right| \, \left| \k + \alpha - 1\right| 
   \geq 2 \k \, | 1 - \k - \alpha | \, .
 \]
 All arguments are repeated with obvious sign changes for $k = \hat{j} - \k$
\end{proof}

With the previous two lemmas, we are now able to prove boundedness of $T^+_{\alpha + \i \delta}$ and bound the difference of the two DtN maps.

\begin{theorem}
\label{th:pert_dtn}
 Let $\alpha \in A_m = (a_0,a_1)$, $m=1$, $2$. Let $\rho = | \delta | / \min \left\{ \sqrt{ \alpha - a_0 } \, , \, \sqrt{ a_1 - \alpha } \right\}$. Then $T^+_{\alpha + \i\delta} : H^{1/2}_\p(\Gamma^{2\pi}_H) \to H^{-1/2}_\p(\Gamma^{2\pi}_H)$ is bounded for any $\delta \in \R$. Moreover, its difference from $T^+_\alpha$ is bounded by
 \[
  \left\| {T}^+_{\alpha + \i\delta} - {T}^+_\alpha \right\|
  \leq \frac{ \rho }{ \sigma } + \frac{ \rho^3}{ 8 \sigma k } \, ,
 \]
 where $\sigma$ is the constant defined in Lemma \ref{lemma:sqrt_estimate}.
\end{theorem}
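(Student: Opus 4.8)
The plan is to use that both $T^+_\alpha$ and its formal extension $T^+_{\alpha+\i\delta}$ are Fourier multiplier operators on $\Gamma^{2\pi}_H$, with symbols $\i\sqrt{k^2-(\alpha+j)^2}$ and $\i\sqrt{k^2-(\alpha+j+\i\delta)^2}$. For the norms fixed above, the norm of a multiplier with symbol $(m_j)_{j\in\Z}$ acting from $H^{1/2}_\p(\Gamma^{2\pi}_H)$ into $H^{-1/2}_\p(\Gamma^{2\pi}_H)$ is simply
\[
\sup_{j\in\Z}\frac{|m_j|}{(k^2+j^2)^{1/2}}.
\]
The boundedness of $T^+_{\alpha+\i\delta}$ is then immediate: with $m_j=\i\sqrt{k^2-(\alpha+j+\i\delta)^2}$ one has $|m_j|\le(k^2+(\alpha+j)^2+\delta^2)^{1/2}$, the denominator is bounded below by $k>0$ for every $j$, and the quotient stays bounded as $|j|\to\infty$, so the supremum is finite for each $\delta\in\R$.

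For the difference the symbol is $m_j=\i(\sqrt{k^2-(\alpha+j+\i\delta)^2}-\sqrt{k^2-(\alpha+j)^2})$. Since for $\delta\in\R$ the radicands $k^2-(\alpha+j+\i\delta)^2$ and $k^2-(\alpha+j)^2$ stay on the same side of the branch cut of Definition \ref{def:sr}, the two roots differ from $\sqrt{(\alpha+j+\i\delta)^2-k^2}$ and $\sqrt{(\alpha+j)^2-k^2}$ by one common unimodular factor, and hence $|m_j|$ coincides with the left-hand side of \eqref{eq:inequ_2}. I would insert that estimate, divide by $(k^2+j^2)^{1/2}$, and split the result into a principal part carrying $\max\{k,|\alpha+j|\}$ and a cubic part carrying $|\delta|^2/(4|\alpha+j-k|)$; the two will produce the two summands of the asserted bound.

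The cubic part is the benign one. Writing $M=\min\{\alpha-a_0,a_1-\alpha\}$ and factorising $|\sqrt{k^2-(\alpha+j)^2}|=|\alpha+j-k|^{1/2}|\alpha+j+k|^{1/2}$, its denominator becomes $|\alpha+j-k|^{3/2}|\alpha+j+k|^{1/2}(k^2+j^2)^{1/2}$. Bounding this below by combining $|\sqrt{k^2-(\alpha+j)^2}|\ge\sigma\sqrt M$ from Lemma \ref{lemma:sqrt_estimate} with the elementary observation that $\min_{j}|\alpha+j-k|\ge M$ (the distance from $\alpha$ to the lattice $k+\Z$ is at least its distance $M$ to the larger singular set), and using $(k^2+j^2)^{1/2}\ge k$, yields exactly $\rho^3/(8\sigma k)$. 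The principal part
\[
\frac{|\delta|\,\max\{k,|\alpha+j|\}}{2\,|\sqrt{k^2-(\alpha+j)^2}|\,(k^2+j^2)^{1/2}}
\]
is where the genuine difficulty, and the main obstacle, lies. One cannot bound the factor $\max\{k,|\alpha+j|\}/(k^2+j^2)^{1/2}$ by a constant independently of the square root in the denominator: for modes with $|\alpha+j|\gg k$ — most sharply $j=0$ when $k$ is small — this ratio is genuinely large. The cure is to keep the factors together and exploit that a large numerator forces $\alpha+j$ away from the singular set $|\alpha+j|=k$, so that $|\sqrt{k^2-(\alpha+j)^2}|$ grows to compensate. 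Concretely I would again factorise the square root, split into the cases $|\alpha+j|\le k$ and $|\alpha+j|>k$, estimate $|\alpha+j|\le|j|+1\le\sqrt2\,(k^2+j^2)^{1/2}$ for $j\neq0$, and handle the single mode $j=0$ directly using $M\le\alpha-a_0$; Lemma \ref{lemma:sqrt_estimate} then supplies the missing lower bound and delivers the term $\rho/\sigma$. Taking the supremum over $j$ and summing the two contributions gives the claim.
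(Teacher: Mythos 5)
Your framework is the paper's own: the multiplier-norm identity $\|T\| = \sup_{j\in\Z}|m_j|/(k^2+j^2)^{1/2}$, the use of \eqref{eq:inequ_2} for the symbol difference, the split into a principal and a cubic part, and your treatment of the cubic part (Lemma \ref{lemma:sqrt_estimate} combined with $|\alpha+j-k|\ge M:=\min\{\alpha-a_0,a_1-\alpha\}$ and $(k^2+j^2)^{1/2}\ge k$, giving $\rho^3/(8\sigma k)$) coincide exactly with the paper's proof; your boundedness argument for $T^+_{\alpha+\i\delta}$ is also fine. The gap is precisely where you locate ``the genuine difficulty'': the claim that a case split, the bound $M\le\alpha-a_0$, and Lemma \ref{lemma:sqrt_estimate} ``deliver the term $\rho/\sigma$'' at $j=0$ is asserted without computation, and it is false. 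For $\alpha\in A_2$ and $k\le 1/2$ one has $a_0=\underline{k}=k$ and $\sigma=\sqrt{k}$, so what you must show for the $j=0$ principal part is
\[
 \frac{|\delta|\,\alpha}{2k\sqrt{\alpha^2-k^2}}\;\le\;\frac{\rho}{\sigma}=\frac{|\delta|}{\sqrt{kM}}\,.
\]
Take $k=0.01$, $\alpha=0.5$ (so $A_2=(0.01,0.99)$, $M=0.49$): the left side is about $50\,|\delta|$, the right side about $14.3\,|\delta|$. The compensation you invoke tops out at $|\sqrt{k^2-\alpha^2}|\approx\alpha$, which cancels the numerator $\max\{k,\alpha\}=\alpha$ but leaves the factor $1/\sqrt{k^2+0^2}=1/k$ untouched; following your route via $M\le\alpha-a_0$ one only closes the estimate when $\alpha\le 2(1+\sqrt{2})\,k$.

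Moreover, no cleverer bookkeeping can repair this, because the stated bound is itself false in that regime. Expanding the $j=0$ symbol to first order,
\[
 \left|\sqrt{k^2-(\alpha+\i\delta)^2}-\sqrt{k^2-\alpha^2}\right|
 =\frac{\alpha}{\sqrt{\alpha^2-k^2}}\,|\delta|+O(\delta^2)\;\ge\;|\delta|+O(\delta^2),
\]
so $\|T^+_{\alpha+\i\delta}-T^+_\alpha\|\ge |\delta|/k+O(\delta^2)$, which for $M\ge k$ and $\delta$ small exceeds $\rho/\sigma+\rho^3/(8\sigma k)=|\delta|/\sqrt{kM}+O(|\delta|^3)$ (in the example above, roughly $100\,|\delta|$ versus $14.3\,|\delta|$). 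You should know that the paper's proof hides exactly this problem: in passing to the last line of its chain of estimates it implicitly uses $\max\{k,|\alpha+j|\}\le 2\sqrt{k^2+j^2}$, which holds for every $j\ne 0$ but at $j=0$ requires $|\alpha|\le 2k$. Hence both the paper's argument and the theorem's explicit constant are valid only under an implicit largeness condition on $k$ (for instance $k\ge 1/2$, in which case your argument and the paper's both go through). Your diagnosis of the obstacle is correct and more honest than the paper's treatment of it, but the repair you sketch does not exist: the correct conclusion is that the statement needs an additional hypothesis on $k$ (or a $k$-dependent constant), not that a finer case analysis yields $\rho/\sigma$.
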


\begin{proof}
  Consider $\phi \in C^\infty_\p(\Gamma^{2\pi}_H)$ and its Fourier series, $\phi(x_1) = \sum_{j\in\Z} \widehat{\phi}_j \EE^{\i j x_1}$. Then
 \[
   \left({T}^+_{\alpha + \i\delta} - {T}^+_\alpha \right) \phi 
   = \i \sum_{j\in\Z} \left[ \sqrt{ k^2 - (\alpha + j + \i\delta)^2 } - \sqrt{k^2 - (\alpha + j)^2} \right] \widehat{\phi}_j \EE^{\i (\alpha+j) x_1},
 \]
 with convergence of the series ensured by the smoothness of $\phi$. Hence
 \[
   \left\| \left({T}^+_{\alpha + \i\delta} - {T}^+_\alpha \right) \phi \right\|^2_{H^{-1/2}_\p(\Gamma^{2\pi}_H)}
   = \sum_{j \in \Z} ( k^2 + |j|^2 )^{-1/2} \left| \sqrt{k^2 - (\alpha + j + \i\delta)^2} - \sqrt{k^2 - (\alpha+j)^2} \right|^2 \left| \widehat{\phi}_j \right|^2 .
 \]
 Thus, by the previous two lemmas,
 \begin{align*}
   \left\| \left({T}^+_{\alpha + \i\delta} - {T}^+_\alpha \right) \phi \right\|_{H^{-1/2}_\p(\Gamma^{2\pi}_H)}
   & \leq \sup_{j\in\Z} \frac{ \left| \sqrt{k^2 - (\alpha + j +\i\delta)^2} - \sqrt{k^2 - (\alpha+j)^2} \right| }{ \sqrt{k^2+|j|^2} } \, \| \phi \|_{H^{1/2}_\p(\Gamma^{2\pi}_H)} \\
   & \leq \sup_{j\in\Z}  \frac{ |\delta| \, \left( \max\{ k, |\alpha+j| \} + \frac{ |\delta|^2 }{ 4 \left| \alpha + j - k \right| } \right) }{2 \sigma \, \min \left\{ \sqrt{|\alpha-a_0|} \, , \, \sqrt{|\alpha-a_1|}  \right\} } \, \frac{1}{ \sqrt{ k^2 + |j|^2 } } \, \| \phi \|_{H^{1/2}_\p(\Gamma^{2\pi}_H)}   \\
   & \leq \frac{ | \delta | \left( 1 + \delta^2 \, \sup\limits_{j \in \Z} \frac{1}{8 \, | \alpha + j - k | \, \sqrt{ k^2 + | j |^2 } }\right) }{ \sigma \, \min \left\{ \sqrt{|\alpha-a_0|} \, , \, \sqrt{|\alpha-a_1|} \right\} } \, \| \phi \|_{H^{1/2}_\p(\Gamma^{2\pi}_H)} \, .
 \end{align*}
The proof is finished by observing $| \alpha + j - k | \geq \min \left\{  \alpha - a_0  \, , \,  a_1 - \alpha \right\}$.
\end{proof}

Our goal is to prove that the operator $\mathcal{B}_\alpha + \mathcal{D}_{\delta,\alpha}$ defined in \eqref{eq:D_plus_B} is boundedly invertible. To this end, for fixed $\alpha$ and $\delta$ \emph{real,} we bound the operator $\mathcal{D}_{\delta,\alpha}$ with respect to $\delta$.  Theorem \ref{th:pert_dtn} provides an estimate for the second term in \eqref{eq:D_delta_alpha}. It is also easily checked that
\begin{equation}
 \label{eq:D_domain_bound}
 \left| \int_{\Omega^{2\pi}_H} \left[ 2\delta \frac{\partial v}{\partial x_1} \overline{\phi} + (2\i\alpha\delta- \delta^2) v \overline{\phi} \right] \d x \right|
 \leq \left( 4|\delta| +\delta^2/k \right) \|v\|_{V_H^\p} \|\phi\|_{V_H^\p} \, .
\end{equation}
As we are looking at small perturbations $\delta$, we will assume that $|\delta| \leq k$. Then, as a conclusion from \eqref{eq:D_domain_bound} and Theorem \ref{th:pert_dtn}, the norm of the operator $\mathcal{D}_{\delta,\alpha}$ is bounded by:
\[
 \| \mathcal{D}_{\delta,\alpha} \|
 \leq 5 \, | \delta | + \frac{ \rho }{ \sigma } + \frac{ \rho^3}{ 8 \sigma k } \, .
\]
In \cite{Chand2005}, explicit bounds for the inverse of the unperturbed operator $\mathcal{B}_\alpha$ are provided. It is shown in Theorem 4.1 of that reference that there exists a constant $M \geq 1$ with
\[
  \left\| \mathcal{B}_\alpha^{-1} \right \|\leq M \qquad \text{for all } \alpha \in A_m \, .
\]
Standard operator perturbation results hence show that $\mathcal{B}_\alpha + \mathcal{D}_{\delta, \alpha}$ is boundedly invertible when
\begin{equation}
  \label{eq:bound_for_delta}
  5 \, | \delta | + \frac{ \rho }{ \sigma } + \frac{ \rho^3}{ 8 \sigma k }
  < \frac{1}{M} \, .
\end{equation}
In the following theorem, we provide sufficient conditions on $\delta$ to satisfy this inequality.

\begin{theorem}
 \label{theo:A_m_2_parabolas}
 Let $\alpha \in A_m = (a_0, a_1)$, $m = 1$, $2$. There exists a constant $\mathcal{C} > 0$ such that if
 \[
   | \delta | \leq \mathcal{C} \, \min \left\{ \sqrt{ \alpha - a_0 } \, , \, \sqrt{ a_1 - \alpha } \right\} \, ,
 \]
 then $| \delta | \leq k$ and the bound \eqref{eq:bound_for_delta} is satisfied. Hence, the operator $(\mathcal{B}_\alpha + \mathcal{D}_{\delta,\alpha})^{-1}$ and consequently also $w(\alpha + \i \delta, \cdot)$ depend analytically on $\alpha + \i\delta$ on the set
 \[
    \mathbf{A}_m = \big\{ \alpha + \i \delta : \alpha \in A_m \, , \; \delta \in \R \text{ with } | \delta | \leq \mathcal{C} \, \min \left\{ \sqrt{ \alpha - a_0 } \, , \, \sqrt{ a_1 - \alpha } \right\}  \big\}  \, .
 \]
\end{theorem}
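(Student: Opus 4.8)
The plan is to verify the stated inequality \eqref{eq:bound_for_delta} by inserting the ansatz $|\delta| \le \mathcal{C} \, \min\{\sqrt{\alpha-a_0}, \sqrt{a_1-\alpha}\}$ and bounding each of the three terms on the left-hand side by a constant multiple of $\mathcal{C}$, then choosing $\mathcal{C}$ small enough. First I would set $m := \min\{\sqrt{\alpha-a_0},\sqrt{a_1-\alpha}\}$ so that $|\delta| \le \mathcal{C}\,m$ and, by definition, $\rho = |\delta|/m \le \mathcal{C}$. Observe that since $\alpha \in A_m$ the interval has length at most $1$, so $m \le 1/\sqrt{2} \le 1$; hence $|\delta| \le \mathcal{C} \, m \le \mathcal{C}$, which immediately gives $|\delta| \le k$ provided $\mathcal{C} \le k$. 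The first term is then bounded by $5\mathcal{C}$. For the remaining two terms I use $\rho \le \mathcal{C}$ directly, so that $\rho/\sigma \le \mathcal{C}/\sigma$ and $\rho^3/(8\sigma k) \le \mathcal{C}^3/(8\sigma k) \le \mathcal{C}/(8\sigma k)$ once $\mathcal{C} \le 1$.

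Collecting these estimates, the left-hand side of \eqref{eq:bound_for_delta} is bounded by
\[
  \mathcal{C}\left( 5 + \frac{1}{\sigma} + \frac{1}{8\sigma k} \right) .
\]
It therefore suffices to choose
\[
  \mathcal{C} = \min\left\{ k \, , \, 1 \, , \, \frac{1}{2M} \left( 5 + \frac{1}{\sigma} + \frac{1}{8\sigma k} \right)^{-1} \right\} ,
\]
which guarantees both $|\delta| \le k$ and that the left-hand side of \eqref{eq:bound_for_delta} is at most $1/(2M) < 1/M$, so the bounded invertibility condition holds. Note that $\sigma$ and $M$ depend only on $k$ and the fixed geometry, not on $\alpha$ or $\delta$, so $\mathcal{C}$ is a genuine constant uniform over $A_m$.

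Once \eqref{eq:bound_for_delta} is established, the analyticity claim follows from standard operator perturbation theory: I would invoke the Neumann series argument showing that if $\mathcal{B}_\alpha$ is boundedly invertible with $\|\mathcal{B}_\alpha^{-1}\| \le M$ and $\|\mathcal{D}_{\delta,\alpha}\| < 1/M$, then $\mathcal{B}_\alpha + \mathcal{D}_{\delta,\alpha} = \mathcal{B}_\alpha(I + \mathcal{B}_\alpha^{-1}\mathcal{D}_{\delta,\alpha})$ is invertible with a uniformly bounded inverse. The joint analyticity of $(\mathcal{B}_\alpha + \mathcal{D}_{\delta,\alpha})^{-1}$ in $\alpha + \i\delta$ then rests on the observation, already recorded in the discussion preceding Definition \ref{def:sr} and following Theorem \ref{th:pert_dtn}, that the operator family depends analytically on the complex parameter $\alpha + \i\delta$ throughout the strips $A_m + \i\R$ (the domain integrals are entire in $\delta$, and each square-root term $\sqrt{k^2-(\alpha+j+\i\delta)^2}$ is analytic there by Definition \ref{def:sr}, with the series converging by the estimates of Theorem \ref{th:pert_dtn}). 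Finally, since $w(\alpha+\i\delta,\cdot) = (\mathcal{B}_\alpha + \mathcal{D}_{\delta,\alpha})^{-1} F(\alpha+\i\delta,\cdot)$ and $F$ inherits analyticity from its definition, $w$ is analytic on $\mathbf{A}_m$.

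The main obstacle, and the only place requiring real care, is confirming that $\rho \le \mathcal{C}$ uniformly: the quantity $\rho = |\delta|/m$ has the minimum-of-two-square-roots factor in its denominator, and the entire point of the parabolic region $\mathbf{A}_m$ is that $|\delta|$ is allowed to grow only like this factor, precisely so that $\rho$ stays bounded as $\alpha$ approaches the endpoints $a_0, a_1$ (where the square-root singularities of $T^+_\alpha$ sit). The estimates of Lemma \ref{lemma:sqrt_estimate} and Theorem \ref{th:pert_dtn} were engineered to make exactly this denominator appear, so the verification is essentially bookkeeping; the conceptual work has been front-loaded into those two results.
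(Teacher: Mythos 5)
Your proposal is correct and follows essentially the same route as the paper: both verify \eqref{eq:bound_for_delta} by exploiting that the hypothesis gives $\rho = |\delta|/\min\{\sqrt{\alpha-a_0},\sqrt{a_1-\alpha}\} \leq \mathcal{C}$ together with $|\delta| \leq \mathcal{C}$, and then choose $\mathcal{C}$ small relative to $M$, $\sigma$ and $k$ (the paper factors $\rho$ out of the sum rather than bounding each term by a multiple of $\mathcal{C}$, but this is only bookkeeping). Your extra detail on the Neumann-series and analyticity step is sound and matches what the paper delegates to the discussion preceding the theorem.
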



\begin{proof}
 Let $\mu = \max\limits_{\alpha \in A_m} \min \left\{ \sqrt{ \alpha - a_0 } \, , \, \sqrt{ a_1 - \alpha } \right\}$ and choose
 \[
    \mathcal{C} < \min \left\lbrace \frac{k}{\mu} , \frac{1}{M \, ( 5 + \frac{1}{\sigma} + \frac{1}{8 \sigma k} ) } \right \rbrace \, . 
 \]
 Let $| \delta | \leq \mathcal{C} \, \min \left\{ \sqrt{ \alpha - a_0 } \, , \, \sqrt{ a_1 - \alpha } \right\}$. Then $| \delta | \leq k$ and
 \[
   \rho = \frac{ | \delta | }{ \min \left\{ \sqrt{ \alpha - a_0 } \, , \, \sqrt{ a_1 - \alpha } \right\} } 
   < \frac{1}{M \, ( 5 + \frac{1}{\sigma} + \frac{1}{8 \sigma k} ) }
   \leq \frac{1}{M \, ( 5 \, \min \left\{ \sqrt{ \alpha - a_0 } \, , \, \sqrt{ a_1 - \alpha } \right\} + \frac{1}{\sigma} + \frac{1}{8 \sigma k} ) }
 \]
 as $\mu \leq 1$. Note also $\rho \leq 1$ as $M \geq 1$ and $\sigma \leq 1$. Now we conclude
 \[
   5 \, | \delta | + \frac{ \rho }{ \sigma } + \frac{ \rho^3}{ 8 \sigma k }
   = \left( 5 \, \frac{| \delta |}{\rho} \, + \frac{1}{\sigma} + \frac{\rho^2}{8 \sigma k} \right) \rho
   < \frac{1}{M} \, .
 \]
 Hence \eqref{eq:bound_for_delta} is satisfied.
\end{proof}


In this case, the set $\mathbf{A}_m$ is the intersection of the interior of two parabolas in the complex plane. For  $A_m:=(a_0,a_1)$, we can write
\[
   \mathbf{A}_m = \{ \alpha + \i \delta : \alpha \geq a_0 + \delta^2/\mathcal{C}^2 \text{ and } \alpha \leq a_1 - \delta^2 / \mathcal{C}^2 \} \setminus \{ a_0, \, a_1 \}
\]
with the constant $C$ from Theorem \ref{theo:A_m_2_parabolas}
In Section \ref{sec:meshes} below, we will require analytical extensions of  $\alpha \mapsto w(\alpha,\cdot)$ to certain ellipses. Hence we will now consider ellipses contained in $\mathbf{A}_m$.

Let us recall some basic definitions and properties of an ellipses. An ellipse with center at $(s,t) \in \R^2$ and half axes $a\geq b>0$ parallel to the coordinate axes is defined as the set
\[
  \mathcal{E} = \left\{ x \in \R^2 : \frac{(x_1-s)^2}{a^2}+\frac{(x_2-t)^2}{b^2} \leq 1 \right\},
\]
The number $c:=\sqrt{a^2-b^2}$ is the called the linear eccentricity,  $(-c+s,t)$ and $(c+s,t)$ are the foci and $(s-a,0)$, $(s+a,0)$ the vertices. By $\E^r_{c_1,c_2}$ we will denote the ellipse with foci at $(c_1,0)$ and $(c_2,0)$ and sum of the half-axes $r$; by $\widetilde{\E}^r_{a_1,a_2}$ we will denote the ellipse with vertices at $(a_1,0)$ and $(a_2,0)$ and sum of the half-axes  $r$.

\begin{lemma}\label{lemma:large_ellipse}
Let $a$ and set $r=a + \mathcal{C} \, \sqrt{a/2}$. Then 
\[
  \widetilde{\E}^r_{-a,a} \subseteq P = \{ x \in \R^2 : x_1 \geq -a + x_2^2/\mathcal{C}^2 \text{ and } x_1 \leq a - x_2^2 / \mathcal{C}^2 \} .
\]
\end{lemma}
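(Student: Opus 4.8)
The plan is to convert the geometric description of $\widetilde{\E}^r_{-a,a}$ into its standard algebraic form and then reduce the claimed inclusion to an elementary one-variable inequality. First I would read off the semi-axes. Since the vertices are prescribed to be $(-a,0)$ and $(a,0)$, the ellipse is centered at the origin with semi-major half-axis equal to $a$ along the $x_1$-direction. The hypothesis that the sum of the half-axes equals $r=a+\mathcal{C}\sqrt{a/2}$ then fixes the semi-minor half-axis as $b=r-a=\mathcal{C}\sqrt{a/2}$, so that the crucial relation $b^2/\mathcal{C}^2=a/2$ holds. Thus
\[
  \widetilde{\E}^r_{-a,a}=\left\{x\in\R^2:\ \frac{x_1^2}{a^2}+\frac{x_2^2}{b^2}\leq 1\right\},\qquad b^2=\frac{\mathcal{C}^2 a}{2}.
\]

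Next I would exploit symmetry to halve the work. Both the ellipse and the region $P$ are invariant under the reflection $x_1\mapsto -x_1$, since $P$ is defined by the symmetric pair of conditions $x_1\geq -a+x_2^2/\mathcal{C}^2$ and $x_1\leq a-x_2^2/\mathcal{C}^2$. Hence it suffices to establish the single inequality $x_1\leq a-x_2^2/\mathcal{C}^2$, equivalently $x_1+x_2^2/\mathcal{C}^2\leq a$, for every point of the closed elliptical disk; the reflected inequality giving the left parabola condition then follows automatically.

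For the remaining inequality I would use the ellipse constraint to eliminate $x_2$. For any $x=(x_1,x_2)$ in the disk we have $x_2^2\leq b^2\left(1-x_1^2/a^2\right)$, and $x_1\in[-a,a]$. Substituting this bound together with $b^2/\mathcal{C}^2=a/2$ gives
\[
  x_1+\frac{x_2^2}{\mathcal{C}^2}\ \leq\ x_1+\frac{b^2}{\mathcal{C}^2}\left(1-\frac{x_1^2}{a^2}\right)=x_1+\frac{a}{2}-\frac{x_1^2}{2a},
\]
so that the claim reduces to showing $x_1+\tfrac{a}{2}-\tfrac{x_1^2}{2a}\leq a$ for $x_1\in[-a,a]$. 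This is immediate by completing the square, since the difference equals $\tfrac{1}{2a}(x_1-a)^2\geq 0$, with equality precisely at the vertex $x_1=a$, $x_2=0$. I do not expect any genuine obstacle in the analytic content: once the semi-axes are correctly identified, the argument is purely a quadratic estimate. The only point requiring care is the geometric bookkeeping in the first step—correctly matching "vertices" to the semi-major axis and "sum of half-axes" to $b$—and verifying that the bound on $x_2^2$ is valid on the entire disk (not merely its boundary), which is ensured because $x_1\mapsto x_1+x_2^2/\mathcal{C}^2$ is maximized over the convex disk on its boundary.
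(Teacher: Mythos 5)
Your proof is correct and follows essentially the same route as the paper: reduce to one parabola condition by the $x_1\mapsto -x_1$ symmetry, use the ellipse relation $x_2^2\leq b^2(1-x_1^2/a^2)$ with $b=\mathcal{C}\sqrt{a/2}$, and finish with an elementary quadratic estimate. The only cosmetic difference is that the paper checks the left inequality on $x_1\in[-a,0]$ via the factorization $(a-x_1)(a+x_1)/(2a)\leq a+x_1$, while you check the right inequality on all of $[-a,a]$ by completing the square to $(x_1-a)^2/(2a)\geq 0$.
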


\begin{proof}
  As both $P$ and $\widetilde{\E}^r_{-a,a}$ are symmetric with respect to the $x_2$-axis, it is sufficient to consider case $x_1 \in [-a,0]$ and the first inequality in the definition of $P$. For $x = (x_1,x_2) \in \widetilde{\E}^r_{-a,a}$ we have $x_2^2 \leq \mathcal{C}^2 \frac{ a^2 - x_1^2 }{ 2 a }$ and hence
  \[
    -a + \frac{x_2^2}{\mathcal{C}^2} \leq -a + \frac{ (a - x_1) \, (a + x_1) }{2a} \leq -a + a + x_1 = x_1 \, .
  \]
\end{proof}



In the next lemma, we prove that a certain family of smaller ellipses are contained in $\widetilde{\E}^{a+b}_{-a,a}$.

\begin{lemma}
  \label{lemma:small_ellipse}
  Let $a$, $b > 0$, $0 < \lambda \leq a$, $0 < \mu \leq \frac{b}{a} \lambda$ and $z \in [-a + \lambda , a - \lambda]$. Then $\widetilde{\E}^{\lambda + \mu}_{z - \lambda, z + \lambda} \subseteq \widetilde{\E}^{a+b}_{-a,a}$.
\end{lemma}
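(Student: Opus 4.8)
The plan is to convert the $\widetilde{\E}$-notation into explicit Cartesian equations and then reduce the claimed inclusion to a single scalar inequality along the boundary of the smaller ellipse. First I would record that $\widetilde{\E}^{a+b}_{-a,a}$, having vertices $(\pm a,0)$ and sum of half-axes $a+b$, is centered at the origin with semi-axes $a$ and $b$, i.e. the set $\{x\in\R^2 : x_1^2/a^2 + x_2^2/b^2 \leq 1\}$. Likewise $\widetilde{\E}^{\lambda+\mu}_{z-\lambda,z+\lambda}$ has center $(z,0)$, semi-major axis $\lambda$ and semi-minor axis $\mu$, so it equals $\{x\in\R^2 : (x_1-z)^2/\lambda^2 + x_2^2/\mu^2 \leq 1\}$. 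The hypotheses $0<\lambda\leq a$ and $z\in[-a+\lambda,a-\lambda]$ are precisely what guarantee $\lambda\leq a$ and $|z|\leq a-\lambda$, which is what the final bound will need.

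Next I would reduce to the boundary. The function $x\mapsto x_1^2/a^2 + x_2^2/b^2$ is convex, hence by Bauer's maximum principle it attains its maximum over the compact convex smaller ellipse at an extreme point, i.e. on its boundary. Thus it suffices to verify the inequality $x_1^2/a^2 + x_2^2/b^2 \leq 1$ only for boundary points of $\widetilde{\E}^{\lambda+\mu}_{z-\lambda,z+\lambda}$. Parametrizing these as $x_1 = z+\lambda\cos\theta$, $x_2 = \mu\sin\theta$, the claim becomes $g(\theta)\leq 1$ for all $\theta$, where
\[
  g(\theta) = \frac{(z+\lambda\cos\theta)^2}{a^2} + \frac{\mu^2\sin^2\theta}{b^2}.
\]

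The computation then exploits the hypothesis $\mu\leq (b/a)\lambda$, equivalently $\mu^2/b^2 \leq \lambda^2/a^2$, to bound the second term and collapse $\lambda^2\cos^2\theta + \lambda^2\sin^2\theta$ via the Pythagorean identity, giving $g(\theta)\leq (z^2 + 2z\lambda\cos\theta + \lambda^2)/a^2$. Estimating $2z\lambda\cos\theta \leq 2|z|\lambda$ makes the numerator at most $(|z|+\lambda)^2$, and the constraint $|z|\leq a-\lambda$ yields $|z|+\lambda\leq a$, whence $g(\theta)\leq 1$. I do not anticipate a genuine obstacle: the only points requiring care are the convexity reduction to the boundary (so that interior points need not be checked separately) and eliminating $b$ and $\mu$ through $\mu\leq (b/a)\lambda$ before invoking the Pythagorean identity, after which the estimate is purely elementary.
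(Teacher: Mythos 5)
Your proof is correct, and while it shares the paper's overall strategy (check only the boundary of the small ellipse, parametrized as $(z+\lambda\cos\theta,\,\mu\sin\theta)$, using the hypotheses $\mu \le \frac{b}{a}\lambda$ and $|z| \le a-\lambda$ in the same roles), the actual verification is carried out differently. The paper matches the first coordinate with a boundary point of the large ellipse, writing $z+\lambda\cos\theta = a\cos\phi$ for an auxiliary angle $\phi$, and then compares the squares of the $x_2$-coordinates, showing $\mu^2\sin^2\theta \le b^2\sin^2\phi$; this requires expanding $(a\cos\phi-z)^2$ and arguing that the leftover term $\frac{b^2}{a^2}\left(\lambda^2-(a-z)^2+2az(\cos\phi-1)\right)$ is nonpositive -- a step that tacitly uses $z\ge 0$, with the case $z<0$ covered only implicitly by the symmetry $x_1\mapsto -x_1$. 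Your computation instead substitutes the parametrized boundary point directly into the quadratic form $x_1^2/a^2+x_2^2/b^2$ defining the large ellipse, uses $\mu^2/b^2\le\lambda^2/a^2$ to merge the two terms through the Pythagorean identity, and closes with $(|z|+\lambda)^2\le a^2$; this is shorter, needs no auxiliary angle, and handles both signs of $z$ uniformly. You also justify the reduction to the boundary explicitly (convexity of the quadratic form, Bauer's maximum principle), whereas the paper simply asserts that containment of the boundary implies containment of the ellipse. The two arguments prove the same statement under the same hypotheses; what your route buys is economy and the elimination of the sign subtlety, rather than any extra generality.
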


\begin{proof}
Let $(c +\lambda \cos\theta, \mu \sin\theta) \in \partial \widetilde{\E}^{\lambda + \mu}_{c-\lambda,c+\lambda}$ where $\theta \in [0,2\pi)$. As $c + \lambda \cos\theta \in [-a,a]$, there is a $\phi \in [0,\pi)$ such that 
\[
  c + \lambda \cos\theta = a \cos\phi \, .
\]
Thus $(a\cos\phi,\pm b\sin\phi) \in \partial \widetilde{\E}^{a+b}_{-a,a}$. We compare the squares of the $x_2$-coordinates.
\begin{align*}
    \mu^2 \sin^2 \theta
    & \leq \frac{ b^2 \lambda^2 }{ a^2 } \, \sin^2 \theta
    = \frac{ b^2 }{ a^2 } \left( \lambda^2 - \lambda^2 \, \cos^2 \theta \right) \\
    & = \frac{ b^2 }{a^2} \left( \lambda^2 - ( a \cos \phi - c )^2 \right)
    = b^2 \sin^2 \phi + \frac{ b^2 }{a^2} \left( \lambda^2 - a^2 + 2ac \, \cos \phi - c^2 \right) \\
    & = b^2 \sin^2 \phi + \frac{b^2}{a^2} \left( \lambda^2 - (a-c)^2 + 2ac \, (\cos \phi - 1 ) \right) .
\end{align*}
As $\lambda \leq a - c$, the second term is negative and we conclude $\mu^2 \sin^2 \theta \leq b^2 \sin^2 \phi$. We shown that the boundary of $\widetilde{\E}^{\lambda + \mu}_{c - \lambda, c + \lambda}$ is inside the boundary of $\widetilde{\E}^{a+b}_{-a,a}$ which proves the assertion.
\end{proof}

With the following corollary we apply the results of Lemma \ref{lemma:large_ellipse} and \ref{lemma:small_ellipse} to ellipses contained in the set $\mathbf{A}_m$.

\begin{corollary}\label{th:ellipse}
 Let $A_m = (a_0, a_1)$, $m = 1$, $2$, and set $\hat{z}=(a_0+a_1)/2$. Let $a \leq |A_m| / 2$ and set $b = \mathcal{C} \, \sqrt{a/2}$, $r = a + b$. Choose $\lambda$ and $\mu$ such that all assumptions of Lemma \ref{lemma:small_ellipse} are satisfied. Then $\widetilde{\E}^{\lambda+\mu}_{\hat{z} + z -\lambda, \hat{z} + z +\lambda} \subseteq \widetilde{E}^r_{\hat{z}-a,\hat{z}+a} \subseteq \overline{\mathbf{A}_m}$.
\end{corollary}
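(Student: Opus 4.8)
The plan is to reduce the assertion to the two preceding lemmas by a horizontal translation of the complex plane: both $\mathbf{A}_m$ and all the ellipses in the statement are obtained from their origin-centered counterparts (the ones treated in Lemmas \ref{lemma:large_ellipse} and \ref{lemma:small_ellipse}) by shifting the real part by $\hat z = (a_0+a_1)/2$. Writing $L := |A_m|/2 = (a_1-a_0)/2$, so that $a_0 = \hat z - L$ and $a_1 = \hat z + L$, the description of $\mathbf{A}_m$ from Theorem \ref{theo:A_m_2_parabolas} shows, under the identification $\alpha + \i\delta \leftrightarrow (x_1,x_2) \in \R^2$, that
\[
  \overline{\mathbf{A}_m} = \{ (x_1,x_2) : x_1 \geq \hat z - L + x_2^2/\mathcal{C}^2 \text{ and } x_1 \leq \hat z + L - x_2^2/\mathcal{C}^2 \} ,
\]
the closure simply restoring the two excluded vertices $a_0$, $a_1$.

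For the second inclusion I would first apply Lemma \ref{lemma:large_ellipse} with the given $a$. Since $b = \mathcal{C}\sqrt{a/2}$, the radius $r = a + b$ is exactly the one appearing there, so $\widetilde{\E}^r_{-a,a} \subseteq P$, where $P$ is the centered parabola region with parameter $a$. Translating by $\hat z$ gives $\widetilde{\E}^r_{\hat z - a, \hat z + a} \subseteq P + (\hat z, 0)$, whose points satisfy $x_1 \geq \hat z - a + x_2^2/\mathcal{C}^2$ and $x_1 \leq \hat z + a - x_2^2/\mathcal{C}^2$. The hypothesis $a \leq |A_m|/2 = L$ then yields $\hat z - a \geq \hat z - L = a_0$ and $\hat z + a \leq \hat z + L = a_1$, so each defining inequality of $P + (\hat z, 0)$ implies the corresponding one for $\overline{\mathbf{A}_m}$; hence $\widetilde{\E}^r_{\hat z - a, \hat z + a} \subseteq \overline{\mathbf{A}_m}$.

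For the first inclusion I would apply Lemma \ref{lemma:small_ellipse} verbatim with the chosen $\lambda$, $\mu$ and center $z$, obtaining $\widetilde{\E}^{\lambda+\mu}_{z-\lambda, z+\lambda} \subseteq \widetilde{\E}^{a+b}_{-a,a} = \widetilde{\E}^r_{-a,a}$, and then translate by $\hat z$ to get $\widetilde{\E}^{\lambda+\mu}_{\hat z + z - \lambda, \hat z + z + \lambda} \subseteq \widetilde{\E}^r_{\hat z - a, \hat z + a}$. Chaining this with the previous step produces the full sequence of inclusions in the assertion.

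The only genuinely delicate points here are bookkeeping ones: confirming that the radius $r = a+b$ matches Lemma \ref{lemma:large_ellipse} under $b = \mathcal{C}\sqrt{a/2}$, and that the constraint $a \leq |A_m|/2$ is precisely what is needed so that the narrower centered parabola region $P$ (parameter $a$) sits inside the wider one describing $\mathbf{A}_m$ (parameter $L$). Because a real translation preserves all the ellipse and parabola data except the center, no new estimates are required beyond those already established; the entire work consists of aligning the parameters of the two lemmas and invoking the inequality $a \leq |A_m|/2$.
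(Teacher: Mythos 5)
Your proposal is correct and matches the paper's intended argument: the corollary is stated there without an explicit proof, precisely because it is meant to follow from Lemma \ref{lemma:large_ellipse} and Lemma \ref{lemma:small_ellipse} by the translation-by-$\hat{z}$ bookkeeping you carry out. Your two additional observations --- that $r = a + \mathcal{C}\sqrt{a/2}$ is exactly the radius in Lemma \ref{lemma:large_ellipse}, and that $a \leq |A_m|/2$ makes the translated parabolic region with parameter $a$ sit inside the region describing $\overline{\mathbf{A}_m}$ from Theorem \ref{theo:A_m_2_parabolas} --- are exactly the routine verifications the paper leaves implicit.
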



\section{Nonuniform meshes for the inverse Bloch transform}
\label{sec:meshes}

In this section, we introduce a method based on nonuniform meshes for numerical integration of functions of one variable with a square root singularity. Later on, we extend the method to approximate the inverse Bloch transform.

Let us start by considering the numerical approximation of the integral
\begin{equation}
 \label{eq:int_sample}
 I(\xi):=\int_0^{h} \xi(t) \, \d t \, ,
\end{equation}
where $\xi(t) = \xi_1(t) + \sqrt{t} \, \xi_2(t)$, $t \in [0,h]$, and both $\xi_1$ and $\xi_2$ are analytic in $[0,h]$.

Given a positive parameter $p \in (0,1)$ and $N \in \N$, the  method is described as follows. Let the nodal points be defined as
\[
  t_0=0, \qquad t_n = p^{n-1} h \, , \quad n = 1, \ldots, N+1 \, .
\]
These points are the end points of the subintervals
\[
  J_0 = [t_0,t_{N+1}], \qquad J_n = [t_{n+1}, t_n], \quad n=1,\dots,N \, .
\]
The integrand $\xi$ is analytic in any $J_n$, $n=1,\dots,N$, and we use an $M$-point Gauss–Legendre quadrature to approximate the integral on any such interval. On the interval $J_0$, the trapezoidal rule is used. Let the points and weights for the $M$-point Gauss–Legendre quadrature in $[-1,1]$ be denoted by
\[
  \left\{ (\tau_j, \, w_j) : \, j=1,\dots,M \right\} .
\]
For $n=1, \ldots, N$, the integral $I_n(\xi) := \int_{t_{n+1}}^{t_n} \xi(t) \, \d t$ is approximated by
\[
I_n^{M}(\xi)= \frac{p^{n-1}h - p^n h}{2} \sum_{j=1}^M \xi\left( \frac{p^{n-1}h-p^n h}{2} \, \tau_j + \frac{p^{n-1}h+p^n h}{2} \right) w_j.
\]
For $n=0$, the integral $I_0(\xi):=\int_{0}^{p^N h}\xi(t)\d t$ is approximated by
\[
 I_0^N(\xi)=\frac{p^N h}{2}\xi(0)+ \frac{p^N h}{2}\xi(p^N h).
\]
Thus, the complete composite quadrature formula is
\begin{multline}
 \label{eq:adaptive_sample}
 I_{N,M}(\xi) 
 = \sum_{n=1}^N I_n^{M}(\xi)+I_N^0(\xi)
 = \frac{p^N h}{2}\xi(0) + \frac{p^N h}{2} \xi(p^N h) \\
 {} + \sum_{n=1}^N\left[\frac{p^{n-1}h-p^n h}{2}\sum_{j=1}^M\xi\left(\frac{p^{n-1}h-p^n h}{2} \tau_j+\frac{p^{n-1}h+p^n h}{2}\right)w_j\right].
\end{multline}
Our goal is to estimate the error of the approximation of $I(\xi)$ by $I_{N,M}(\xi)$. We first quote a derivative free error estimate for Gaussian quadrature, for details we refer to \cite{Saute2007}. We also recall our notation for ellipses from before Lemma \ref{lemma:large_ellipse}.

\begin{theorem}[Theorem 5.3.13, \cite{Saute2007}]
  \label{th:err_Gaussian}
  Let $\rho > 1/2$ and consider the ellipse $\E_{0,1}^\rho$ as a subset of the complex plane. Let $\zeta : \, [0,1] \rightarrow \C$ be real analytic with complex analytic extension to $\E_{0,1}^\rho$. Denote by $I$ the integral over $(0,1)$ with integrand $\zeta$ and by $Q_M$ its approximation by the $M$-point Gauss-Legendre quadrature. Then
  \[
    | I - Q_M| \leq C \, (2\rho)^{-2M} \max_{z \in \partial \E^\rho_{0,1}} |\zeta(z)| \, .
  \]
\end{theorem}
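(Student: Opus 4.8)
The plan is to reduce the quadrature error to a best-polynomial-approximation problem and then invoke the classical estimate for functions analytic in an ellipse. Two structural facts about the $M$-point Gauss--Legendre rule drive everything: it is exact for all polynomials of degree at most $2M-1$, and its weights $w_j$ are positive with $\sum_{j=1}^M w_j$ equal to the length of the interval. On $[0,1]$ this gives $|Q_M(\phi)| \le \sum_j |w_j| \, \max_{[0,1]}|\phi| = \max_{[0,1]}|\phi|$, while trivially $|I(\phi)| \le \max_{[0,1]}|\phi|$. Hence for every polynomial $q$ of degree $\le 2M-1$, exactness gives $I(q) = Q_M(q)$, so that
\[
  |I(\zeta) - Q_M(\zeta)| = |(I - Q_M)(\zeta - q)| \le 2 \, \max_{t \in [0,1]} |\zeta(t) - q(t)| \, ,
\]
and taking the infimum over such $q$ bounds the error by twice the degree-$(2M-1)$ best uniform approximation error $E_{2M-1}(\zeta)$.

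Second, I would pass to the reference interval $[-1,1]$ via the affine map $t \mapsto 2t-1$. Best uniform approximation errors are invariant under this reparametrisation (one approximates values only), so $E_{2M-1}(\zeta)$ equals the corresponding quantity for $\widetilde\zeta(s) := \zeta\big((s+1)/2\big)$ on $[-1,1]$. The map doubles all distances, so the ellipse $\E_{0,1}^\rho$ --- foci $0,1$, linear eccentricity $1/2$, sum of half-axes $\rho$ --- is carried to the Bernstein ellipse $\E^{R}_{-1,1}$ with foci $\pm 1$ and sum of half-axes $R = 2\rho$, to which $\widetilde\zeta$ extends analytically. This is precisely where the factor $2\rho$ in the statement originates, and the hypothesis $\rho > 1/2$ guarantees $R > 1$, i.e. a nondegenerate ellipse strictly containing $[-1,1]$. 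Moreover $\max_{\partial\E^R_{-1,1}}|\widetilde\zeta| = \max_{\partial\E^\rho_{0,1}}|\zeta|$ since the affine map sends boundary to boundary.

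The main ingredient --- and the only genuinely analytic step --- is Bernstein's estimate for the best polynomial approximation of a function analytic in $\E^R_{-1,1}$. I would expand $\widetilde\zeta$ in Chebyshev polynomials, $\widetilde\zeta = \sum_{k\ge 0} a_k T_k$, and represent the coefficients via the Joukowski map $J(u) = \tfrac12(u + u^{-1})$, which carries $|u| = R$ onto $\partial\E^R_{-1,1}$; deforming the defining contour from the unit circle to $|u| = R$, inside which $\widetilde\zeta \circ J$ stays analytic, yields
\[
  |a_k| \le 2 \, R^{-k} \, \max_{z \in \partial\E^R_{-1,1}} |\widetilde\zeta(z)| \, .
\]
Summing the tail of the Chebyshev series left after truncating at degree $2M-1$ then gives
\[
  E_{2M-1}(\zeta) \le \sum_{k \ge 2M} |a_k| \le \frac{2 \, R^{-(2M-1)}}{R - 1} \, \max_{z \in \partial\E^R_{-1,1}} |\widetilde\zeta(z)| \, .
\]
Combining this with the reduction of the first paragraph and substituting $R = 2\rho$, the error is bounded by $\frac{4R}{R-1}\,(2\rho)^{-2M}\max_{\partial\E^\rho_{0,1}}|\zeta|$; absorbing the factor $\frac{4R}{R-1} = \frac{8\rho}{2\rho-1}$ (finite for $\rho > 1/2$) into the constant $C$ yields the asserted bound. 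I expect the Chebyshev-coefficient contour estimate to be the crux: the bookkeeping of weights and exactness is routine, whereas the decay $|a_k|\lesssim R^{-k}$ requires the maximum-modulus/contour-deformation argument and careful tracking of the Joukowski correspondence between the circle of radius $R$ and $\partial\E^R_{-1,1}$.
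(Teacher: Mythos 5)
Your proof is correct, but note that there is nothing in the paper to compare it against: the paper does not prove this statement, it imports it verbatim (with citation) as Theorem 5.3.13 of Sauter and Schwab. Your argument is the classical proof of exactly that textbook result, and every step checks out: positivity of the Gauss--Legendre weights plus exactness up to degree $2M-1$ reduce the error to $2E_{2M-1}(\zeta)$; the affine transplantation of $[0,1]$ onto $[-1,1]$ carries $\E^\rho_{0,1}$ to the Bernstein ellipse with foci $\pm 1$ and parameter $R=2\rho$, which is precisely where both the factor $2\rho$ and the hypothesis $\rho>1/2$ originate; the Joukowski contour deformation gives the Chebyshev coefficient decay $|a_k|\le 2R^{-k}\max_{z\in\partial\E^{R}_{-1,1}}|\widetilde{\zeta}(z)|$; and the geometric tail sum produces the bound with $C=8\rho/(2\rho-1)$, which is finite exactly when $\rho>1/2$ and is independent of $M$ and $\zeta$, as the statement requires.
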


The result can be extended to more general cases. Suppose $\zeta$ is an analytic function in $[\alpha, \beta]$ and let $\rho > \frac{\beta - \alpha}{2}$. If $\zeta$ can be analytically extended to $\E_{\alpha,\beta}^\rho$, then with analogous notation,
\begin{equation}
 \label{eq:err_Gaussian}
 | I - Q_M | \leq C \, \left( \frac{2\rho}{\beta - \alpha} \right)^{-2M} \max_{z \in \partial \E^\rho_{\alpha, \beta}} |\zeta(z)| \, .
\end{equation}
We apply these results to estimating the error in our quadrature rule for each interval $J_n$, $n=1,\dots,N$.

\begin{lemma}\label{th:err_I_n}
  Suppose $\xi$ can be analytically extended to an ellipse $\widetilde{\E}_{0,2h}^{\rho h}$ with vertices $0$, $2h$ and sum of semi-axis $\rho h$ where $1<\rho< 2$. For any $n=1,\dots,N$, there is a constant $C>0$ such that
  \begin{equation}
    \left|I_n(\xi)-I_n^{M}(\xi)\right|\leq C \left( \min\left\{ \frac{ 1 + \sqrt{p}  }{1 - \sqrt{p}}   \, , \; \sqrt{ \frac{\rho}{2 - \rho} } \right\}\right)^{-2M}\max_{z\in\partial \widetilde{\E}^{\rho h}_{0,2h}}|\xi(z)|.
  \end{equation}
\end{lemma}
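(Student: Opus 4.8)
The plan is to apply the rescaled Gauss--Legendre estimate \eqref{eq:err_Gaussian} on each subinterval $J_n=[p^nh,p^{n-1}h]$ using a Bernstein ellipse $\mathcal{S}$ whose foci are the two endpoints of $J_n$, choosing $\mathcal{S}$ as large as the analyticity of $\xi$ permits. If $\rho'$ denotes the sum of semi-axes of such an ellipse, its linear eccentricity is $c_n:=(p^{n-1}h-p^nh)/2$, so the factor in \eqref{eq:err_Gaussian} equals $\bigl(2\rho'/(p^{n-1}h-p^nh)\bigr)^{-2M}=(\rho'/c_n)^{-2M}$; the governing \emph{rate} is thus the scale-invariant Bernstein parameter $R:=\rho'/c_n$. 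Now $\xi$ is analytic in the open ellipse $\widetilde{\E}^{\rho h}_{0,2h}$ and continuous up to its boundary, the only non-analytic point being the branch point of $\sqrt{t}$ at the vertex $t=0$ (there $\xi$ is still bounded, since $\sqrt{t}\,\xi_2(t)\to0$). Hence the task reduces to maximising $R$ subject to $\mathcal{S}$ lying inside $\overline{\widetilde{\E}^{\rho h}_{0,2h}}\setminus\{0\}$.

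Two constraints govern the choice of $R$. First, $\mathcal{S}$ must avoid the branch point $0$. Since the foci lie on the real axis, a confocal ellipse meets the real axis only at its vertices, so the critical ellipse through $0$ has its left vertex at $0$; its semi-major axis is then $a'=z_n=\tfrac{p^{n-1}h(1+p)}{2}$, its semi-minor axis $b'=p^{n-1}h\sqrt{p}$, its sum of semi-axes $\tfrac{p^{n-1}h}{2}(1+\sqrt{p})^2$, and therefore its rate is $R_a:=\frac{(1+\sqrt{p})^2}{1-p}=\frac{1+\sqrt{p}}{1-\sqrt{p}}$, independent of $n$; any $R<R_a$ keeps $0$ strictly outside. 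Second, $\mathcal{S}$ must sit inside the big ellipse. Shifting $t\mapsto t-h$ puts $\widetilde{\E}^{\rho h}_{0,2h}$ into the form $\widetilde{\E}^{a+b}_{-a,a}$ of Lemma \ref{lemma:small_ellipse} with $a=h$, $b=(\rho-1)h$, while $\mathcal{S}$ becomes $\widetilde{\E}^{\lambda+\mu}_{z-\lambda,z+\lambda}$ with $\lambda=a'$, $\mu=b'$ and shifted center $z=z_n-h$. Writing $a'=\tfrac{c_n}{2}(R+R^{-1})$, $b'=\tfrac{c_n}{2}(R-R^{-1})$, the aspect-ratio hypothesis $\mu\le(b/a)\lambda$ of Lemma \ref{lemma:small_ellipse} becomes $\frac{R^2-1}{R^2+1}\le\rho-1$, i.e.\ exactly $R\le R_b:=\sqrt{\rho/(2-\rho)}$, again independent of $n$.

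It then remains to verify the remaining hypotheses of Lemma \ref{lemma:small_ellipse}, namely $\lambda\le a$ and $z\in[-a+\lambda,a-\lambda]$. Using the identity $R_a+R_a^{-1}=\frac{2(1+p)}{1-p}$, one checks that $R\le R_a$ is precisely the statement that the left vertex $z_n-a'$ is $\ge0$; the same bound together with $p^{n-1}\le1$ for $n\ge1$ gives $z_n+a'\le p^{n-1}h(1+p)<2h$ and $a'<h$, so all hypotheses hold. I would then take $R=\min\{R_a,R_b\}$ (approaching it from below when the closed ellipse touches $0$ or $\partial\widetilde{\E}^{\rho h}_{0,2h}$, and passing to the limit). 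By Lemma \ref{lemma:small_ellipse} the ellipse $\mathcal{S}$ then lies in $\overline{\widetilde{\E}^{\rho h}_{0,2h}}$ and avoids $0$, so $\xi$ is analytic in $\mathcal{S}$; \eqref{eq:err_Gaussian} applies, and since $\mathcal{S}$ is contained in the big ellipse the maximum-modulus principle bounds $\max_{\partial\mathcal{S}}|\xi|$ by $\max_{\partial\widetilde{\E}^{\rho h}_{0,2h}}|\xi|$. Because the chosen ellipse has the same shape (same $R$) for every $n$ and differs only by the scaling $c_n$, both the constant from Theorem \ref{th:err_Gaussian} and the rate are uniform in $n$, which yields the assertion.

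The main obstacle is the geometry of the second constraint: recognising that Lemma \ref{lemma:small_ellipse} is the correct containment tool and that its aspect-ratio condition collapses to the clean threshold $\sqrt{\rho/(2-\rho)}$, together with the slightly delicate verification that $R\le R_a$ simultaneously secures the center and semi-major conditions. This last point hinges on the coincidence $R_a+R_a^{-1}=\frac{2(1+p)}{1-p}$, which places the critical vertex exactly at the branch point $0$. Establishing the $n$-independence of the rate — a consequence of the geometric grading making each $J_n$ a rescaled copy of a single reference configuration — is what ultimately produces one uniform exponential rate for the whole composite rule.
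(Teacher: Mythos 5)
Your proposal is correct and follows essentially the same route as the paper's proof: maximize the Bernstein ellipse confocal with $J_n$ subject to the same two constraints (avoiding the branch point at $0$, which yields the rate $\frac{1+\sqrt{p}}{1-\sqrt{p}}$, and the aspect-ratio condition of Lemma \ref{lemma:small_ellipse} inside $\widetilde{\E}^{\rho h}_{0,2h}$, which yields $\sqrt{\rho/(2-\rho)}$), then apply \eqref{eq:err_Gaussian} together with the maximum modulus principle. If anything, your write-up is slightly more careful than the paper's, since you explicitly verify the center and semi-axis hypotheses of Lemma \ref{lemma:small_ellipse} and handle the degenerate case where the optimal ellipse touches the branch point by a limiting argument.
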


\begin{proof}
  We wish to apply \eqref{eq:err_Gaussian} and thus need to find the largest ellipse with foci at $p^n h$ and $p^{n-1}h$ that lies inside of $\E^{\rho h}_{2h}$. Denote the semi-axis by $\lambda > \mu$, respectively and note that the linear eccentricity is $c = \left[\frac{p^{n-1}-p^n}{2}\right] h$. Hence, we have the necessary condition
  \[
    \mu^2 + \left[ \frac{p^{n-1}-p^n}{2} \right]^2 h^2 = \lambda^2 \, .
  \]
   We wish to apply Corollary \ref{th:ellipse} and in the notation there we have $\hat{z} = h$, $z = \left[\frac{p^{n-1}+p^n}{2}\right]h$, $a = h$ and $b = (\rho - 1) \, h$. The necessary conditions to apply the corollary hence are
  \[
    0 < \lambda \leq \left[\frac{p^{n-1}+p^n}{2}\right] h \, , \qquad \qquad
    \frac{\mu}{\lambda} \leq \rho - 1 \, .
  \]
  Our goal is to maximize $\lambda + \mu$ within these constraints. Note that for $\lambda > 0$, the line $\mu = (\rho - 1) \, \lambda$ intersects the hyperbola $\mu^2 + \left[ \frac{p^{n-1}-p^n}{2} \right]^2 h^2 = \lambda^2$ in exactly one point $(\hat{\lambda}, \hat{\mu})$, where
  \[
    \hat{\lambda} = \frac{ 1 - p }{2 \, \sqrt{2 \rho - \rho^2} } \, p^{n-1} h \, , \qquad
    \hat{\lambda} + \hat{\mu} = \sqrt{ \frac{ \rho }{ 2 - \rho} } \, \frac{1- p}{2} \, p^{n-1} h \, .
  \]
  If $(1+p)/2 \, p^{n-1} h < \hat{\lambda}$, we obtain the maximal value
  \[
    \lambda + \mu = \left( \frac{1+p}{2} + \sqrt{p} \right) p^{n-1} h = \frac{ ( 1 + \sqrt{p} )^2 }{ 2 } \, p^{n-1} h \, .
  \]
  Thus
  \[
    \lambda = \min \left\{  \frac{1 + p}{2} \, , \;  \frac{ 1 - p }{2 \, \sqrt{2 \rho - \rho^2} }\right\} p^{n-1} h
  \]
  and 
  \[
    \frac{ \lambda + \mu}{ c } = \min \left\{ \frac{ 1 + \sqrt{p}  }{1 - \sqrt{p}} \, , \; \sqrt{ \frac{\rho}{2 - \rho} } \right\} .
  \]
   Using \eqref{eq:err_Gaussian} and the maximum modulus principle in complex analysis, we obtain 
  \begin{align*}
    | I_n(\xi) - I^n_M(\xi) | 
   \leq C \left( \min \left\{ \frac{ 1 + \sqrt{p}  }{1 - \sqrt{p}}   \, , \; \sqrt{ \frac{\rho}{2 - \rho} } \right\} \right)^{-2M} \max_{z\in\partial \widetilde{\E}^{\rho h}_{0,2h}} \, |\xi(z)| \, .
 \end{align*}
The proof is finished.
\end{proof}

\noindent We also estimate the error of the trapezoidal rule on $J_0$.

\begin{lemma}\label{th:err_I_0}
 The error of the trapezoidal rule on $J_0$ is bounded by
 \begin{equation}
 \left|I_0(\xi)-I_0^N(\xi)\right|\leq C(p^N h)^{3/2}.
 \end{equation}
\end{lemma}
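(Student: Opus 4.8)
The plan is to exploit the decomposition $\xi = \xi_1 + \sqrt{t}\,\xi_2$ together with the linearity of both the exact integral $I_0$ and the trapezoidal rule $I_0^N$, treating the analytic part and the singular part separately. Writing $\eta := p^N h$ for brevity and using throughout that $\eta \leq h$, I would first dispose of the analytic contribution $\xi_1$. Since $\xi_1$ is analytic on $[0,h]$, its second derivative is bounded there, so the classical trapezoidal error estimate on $J_0 = [0,\eta]$ gives $|I_0(\xi_1) - I_0^N(\xi_1)| \leq \frac{\eta^3}{12}\max_{[0,h]}|\xi_1''| \leq C\,\eta^3$. Because $\eta \leq h$, this is bounded by $C\,h^{3/2}\eta^{3/2} \leq C\,\eta^{3/2}$, which is already of the asserted order (in fact of strictly higher order in $\eta$).

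The essential term is the singular part $\sqrt{t}\,\xi_2(t)$. Here I would first observe that at the left endpoint the integrand vanishes, $\sqrt{0}\,\xi_2(0) = 0$, so the trapezoidal approximation collapses to $\frac{\eta}{2}\sqrt{\eta}\,\xi_2(\eta) = \frac{1}{2}\eta^{3/2}\xi_2(\eta)$. To compare with the exact integral I would use analyticity of $\xi_2$ to write $\xi_2(t) = \xi_2(0) + t\,\tilde{\xi}_2(t)$ with $\tilde{\xi}_2$ analytic, and hence bounded, on $[0,h]$. Then $\int_0^\eta \sqrt{t}\,\xi_2(t)\d{t} = \frac{2}{3}\eta^{3/2}\xi_2(0) + \int_0^\eta t^{3/2}\tilde{\xi}_2(t)\d{t}$, whose second term is bounded by $\frac{2}{5}\|\tilde{\xi}_2\|_\infty\,\eta^{5/2}$, while the quadrature expands as $\frac{1}{2}\eta^{3/2}\xi_2(0) + \frac{1}{2}\eta^{5/2}\tilde{\xi}_2(\eta)$.

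Subtracting, the leading $\eta^{3/2}$ coefficients do not cancel: they leave $\left(\frac{2}{3} - \frac{1}{2}\right)\xi_2(0)\,\eta^{3/2} = \frac{1}{6}\xi_2(0)\,\eta^{3/2}$, while every remaining term is $O(\eta^{5/2})$. Absorbing $\eta^{5/2} \leq h\,\eta^{3/2}$ into the desired order once more via $\eta \leq h$, I obtain $|I_0(\sqrt{t}\,\xi_2) - I_0^N(\sqrt{t}\,\xi_2)| \leq C\,\eta^{3/2}$. Combining this with the analytic part yields $|I_0(\xi) - I_0^N(\xi)| \leq C\,(p^N h)^{3/2}$, as claimed.

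There is no genuine analytical obstacle in this argument; the only conceptual point — and indeed the reason the trapezoidal rule, rather than Gauss--Legendre quadrature, is used on $J_0$ — is that the $\sqrt{t}$ singularity at the origin destroys the usual $O(\eta^3)$ smoothness-based accuracy and pins the dominant error at exactly the half-power order $\eta^{3/2}$. The computation is otherwise elementary; the only care needed is to verify that the $\eta^{3/2}$ terms from the exact and approximate integrals genuinely differ (the factors $2/3$ versus $1/2$), so that the estimate is sharp and the constant is not spuriously zero, and to confirm that all lower-significance contributions are of strictly higher order in $\eta$.
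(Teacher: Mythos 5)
Your proposal is correct, and it follows the same overall structure as the paper's proof: split $\xi = \xi_1 + \sqrt{t}\,\xi_2$, dispose of the analytic part by the standard trapezoidal estimate, and show the singular part contributes $O\bigl((p^N h)^{3/2}\bigr)$. The only difference lies in how the singular term is handled. The paper compares $\sqrt{t}\,\xi_2(t)$ pointwise with its linear interpolant $\xi_{\mathrm{lin}}(t) = \xi_2(p^N h)\, t / \sqrt{p^N h}$ (whose integral is exactly the trapezoidal value), uses the crude bound $|\sqrt{t}\,\xi_2(t) - \xi_{\mathrm{lin}}(t)| \leq 2\sqrt{p^N h}\,\sup_{J_0}|\xi_2|$, and integrates over the interval of length $p^N h$; this needs nothing beyond boundedness of $\xi_2$. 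You instead Taylor-expand $\xi_2(t) = \xi_2(0) + t\,\tilde{\xi}_2(t)$ and compute both the exact integral and the quadrature explicitly, isolating the leading error coefficient $\tfrac{1}{6}\,\xi_2(0)\,(p^N h)^{3/2}$. Your computation is slightly longer but buys something the paper's argument does not: it shows the $3/2$-power rate is \emph{sharp} whenever $\xi_2(0) \neq 0$, i.e.\ the lemma's exponent cannot be improved, which justifies balancing $p^{3N/2}$ against the Gaussian term in Theorem \ref{theo:quad_est}.
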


\begin{proof}
  Recall the representation $\xi(t) = \xi_1(t) + \sqrt{t} \, \xi_2(t)$. It is a standard result that the error in approximating the integral over the analytic function $\xi_1$ by the trapezoidal rule is of order $\mathrm{O}\left((p^N h)^2\right)$. Thus we only consider the second term, which we approximate by linear interpolation
  \[
    \xi_\text{lin}(t)=\frac{1}{\sqrt{p^N h}} \, \xi_2(p^N h) \, t \, .
  \]
  For any $t \in J_0 = [0, p^N h]$,
  \[
    \left| \sqrt{t} \, \xi_2(t) - \xi_\text{lin}(t) \right|
    = \left| \sqrt{t} \, \xi_2(t) - \frac{\xi_2(p^N h)}{\sqrt{p^N h}}t\right|
    \leq 2 \sqrt{p^N h} \, \sup_{t\in J_0} \left|\xi_2(t)\right|
    \leq C(p^N h)^{1/2}.
  \]
  and the application of the trapezoidal rule can be estimated by
  \[
   \left| \int_0^{p^N h} \left[\sqrt{t} \, \xi_2(t)  - \xi_\text{lin}(t)\right] \d t \right|
   \leq C(p^N h)^{3/2}.
  \]
\end{proof} 
  
With Lemma \ref{th:err_I_0} and \ref{th:err_I_n}, we are now prepared to  state an error estimate for the complete composite quadrature rule:

\begin{theorem}
  \label{theo:quad_est}
  When $N$ and $M$ are two positive integers, there is a constant $C>0$ such that
  \begin{equation}
    \left|I(\xi)-I_{N,M}(\xi)\right|
    \leq C N \left( \min \left \{ \frac{ 1 + \sqrt{p}  }{1 - \sqrt{p}}  \, , \; \sqrt{ \frac{\rho}{2 - \rho} } \right\} \right)^{-2M} + C(p^N h)^{3/2}.
  \end{equation}
\end{theorem}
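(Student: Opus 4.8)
The plan is to combine the two preceding lemmas by means of the triangle inequality, exploiting the fact that the subintervals $J_0, J_1, \dots, J_N$ tile $[0,h]$ exactly. First I would write the exact integral as the sum of its contributions over the subintervals, $I(\xi) = I_0(\xi) + \sum_{n=1}^N I_n(\xi)$, and likewise decompose the quadrature as $I_{N,M}(\xi) = I_0^N(\xi) + \sum_{n=1}^N I_n^M(\xi)$ exactly as in \eqref{eq:adaptive_sample}. Subtracting termwise and applying the triangle inequality yields
\[
  \left| I(\xi) - I_{N,M}(\xi) \right| \le \left| I_0(\xi) - I_0^N(\xi) \right| + \sum_{n=1}^N \left| I_n(\xi) - I_n^M(\xi) \right| \, .
\]

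The first term on the right is controlled directly by Lemma \ref{th:err_I_0}, contributing $C (p^N h)^{3/2}$. For the sum I would invoke Lemma \ref{th:err_I_n}, whose hypothesis---that $\xi$ extends analytically to the fixed large ellipse $\widetilde{\E}^{\rho h}_{0,2h}$ with $1 < \rho < 2$---is assumed to hold. The key observation is that the bound supplied by Lemma \ref{th:err_I_n} is \emph{uniform in $n$}: it was obtained by fitting, for each subinterval separately, a confocal ellipse inside the single ellipse $\widetilde{\E}^{\rho h}_{0,2h}$ via Corollary \ref{th:ellipse}, so that both the geometric decay factor $\min\{ (1+\sqrt p)/(1-\sqrt p), \sqrt{\rho/(2-\rho)} \}^{-2M}$ and the maximum-modulus quantity $\max_{z \in \partial \widetilde{\E}^{\rho h}_{0,2h}} |\xi(z)|$ are independent of $n$.

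Consequently each of the $N$ summands obeys one and the same bound, so summing merely multiplies it by $N$; absorbing the fixed factor $\max_{z} |\xi(z)|$ into the constant $C$ then produces the term $C N (\min\{\cdots\})^{-2M}$. Adding the two contributions gives the claimed estimate. I expect no genuine obstacle here, since all the analytic work has already been discharged in Lemmas \ref{th:err_I_0} and \ref{th:err_I_n}. The only point requiring care is the uniformity of the per-interval estimate across all $n$---this is exactly why Lemma \ref{th:err_I_n} was stated with a constant $C$ independent of $n$ and with a right-hand side referring to the common large ellipse rather than to the individual subintervals; once this uniformity is noted, the final summation is immediate.
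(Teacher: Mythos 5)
Your proposal is correct and follows exactly the paper's argument, whose proof is literally ``Combine Lemmas \ref{th:err_I_n} and \ref{th:err_I_0}'': you decompose over the subintervals, apply the triangle inequality, use Lemma \ref{th:err_I_0} for $J_0$ and the $n$-uniform bound of Lemma \ref{th:err_I_n} for the remaining $N$ intervals. Your explicit remark on why the per-interval bound is uniform in $n$ (the common large ellipse $\widetilde{\E}^{\rho h}_{0,2h}$ via Corollary \ref{th:ellipse}) is precisely the point the paper leaves implicit.
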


\begin{proof}
 Combine Lemmas \ref{th:err_I_n} and \ref{th:err_I_0}.
\end{proof}


We conclude by apply the method introduced above to the approximation of the inverse Bloch transform,
\begin{equation}
(\J^{-1}w)(x)=\int_{-\underline{k}}^{1-\underline{k}} w(\alpha,x) \, \EE^{\i\alpha x_1}\d\alpha,\quad x\in\Omega^{2\pi}_H.
\end{equation}
Depending on the different cases in the definition of $E$, this equation is
\begin{equation}
\label{eq:inverse_bloch}
(\J^{-1}w)(x)=\begin{cases}
\displaystyle 
\int_{-\underline{k}}^{1/2-\underline{k}} w(\alpha,x) \, \EE^{\i\alpha x_1} \, \d\alpha + \int_{1/2-\underline{k}}^{1-\underline{k}} w(\alpha,x) \, \EE^{\i\alpha x_1} \, \d\alpha \, , \quad\text{ when }\underline{k}=0,\, \frac{1}{2};\\
\\
\displaystyle 
\begin{aligned}
  &\int_{-\underline{k}}^0 w(\alpha,x) \, \EE^{\i\alpha x_1} \, \d\alpha + \int_0^{\underline{k}} w(\alpha,x) \, \EE^{\i\alpha x_1} \, \d\alpha \\
  & \quad {} + \int_{\underline{k}}^{1/2} w(\alpha,x) \, \EE^{\i\alpha x_1} \, \d\alpha + \int_{1/2}^{1-\underline{k}} w(\alpha,x) \, \EE^{\i\alpha x_1} \, \d\alpha \, , \quad \text{otherwise.}
\end{aligned}
\end{cases}
\end{equation}
Note that in each interval, $w(\alpha,x)$ depends analytically on $\alpha$ except for a square root singularity at one edge point. From the definition of $\underline{k}$, the length of each interval is not larger than $1/2$. 
With a change of variables, we can rewrite any integral in the form
\[
  \int_0^h \phi(\alpha, x) \d\alpha \, , 
\]
where $\phi$ has the form 
\[
  \phi(\alpha,x) = \phi_1(\alpha,x) + \sqrt{\alpha} \, \phi_2(\alpha,x)
\]
with $\phi_1$, $\phi_2 \in C^\omega\left([0,2h];S(D)\right)$. From Theorem \ref{theo:A_m_2_parabolas} and Corollary \ref{th:ellipse} we know that $\phi$ can be extended analytically to $\widetilde{\E}^r_{0,2h}$ with $r = h + \mathcal{C} \, \sqrt{h/2}$. Thus $\rho$ in Lemma and Theorem can be chosen as
\[
  \rho = \min \left\{ \frac{r}{h} , \frac{3}{2} \right\} 
  = \min \left\{ 1 + \frac{\mathcal{C}}{\sqrt{2h}} , \frac{3}{2} \right\} .
\]

\noindent We redefine the integrals with new integrand as
\[
  I_0(\phi)(x) = \int_0^{p^N h} \phi(\alpha,x) \, \d\alpha, \qquad
  I_n(\phi)(x ) =\int_{p^n h}^{p^{n-1}h} \phi(\alpha,x) \, \d\alpha , \quad n=1,2,\dots,N.
\] 
The numerical approximations are
\begin{align*}
   I_N^0(\phi)(x) & = \frac{p^N h}{2} \, \phi(0,x) + \frac{p^N h}{2} \, \phi(p^N h,x) \, ; \\
   I^n_M(\phi)(x) & = \frac{p^{n-1}h-p^n h}{2} \sum_{j=1}^M \phi\left( \frac{p^{n-1}h - p^n h}{2} \tau_j + \frac{p^{n-1}h + p^n h}{2},x\right) w_j \, .
\end{align*}
We can now apply Theorem \ref{theo:quad_est} to the approximation of any of the integrals in \eqref{eq:inverse_bloch}. 

\begin{theorem}
  \label{th:err_inv_bloch}
  There exists constants $C > 0$ and $\Theta > 1$ such that 
  \begin{equation}
    \|I(\phi)-I_{N,M}(\phi)\|_{S(D)}\leq C \left( N \, \Theta^{-2M} + (p^N h)^{3/2} \right) .
   \end{equation}
\end{theorem}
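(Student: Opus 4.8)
The plan is to reduce the statement to an application of Theorem \ref{theo:quad_est} by verifying that its hypotheses hold for the integrand $\phi$ arising from the inverse Bloch transform, and then to track how the resulting constant $\rho$ produces the uniform base $\Theta > 1$. Recall that after the change of variables each integral in \eqref{eq:inverse_bloch} has the form $\int_0^h \phi(\alpha,x)\,\d\alpha$ with $\phi(\alpha,x) = \phi_1(\alpha,x) + \sqrt{\alpha}\,\phi_2(\alpha,x)$, where $\phi_1, \phi_2 \in C^\omega([0,2h];S(D))$. Theorem \ref{theo:A_m_2_parabolas} together with Corollary \ref{th:ellipse} guarantees that $\phi(\cdot,x)$ extends analytically to the ellipse $\widetilde{\E}^r_{0,2h}$ with $r = h + \mathcal{C}\,\sqrt{h/2}$, which is precisely the analytic-extension hypothesis required by Lemma \ref{th:err_I_n} and hence by Theorem \ref{theo:quad_est}.

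First I would fix the value of $\rho$. Since the extension ellipse has $r/h = 1 + \mathcal{C}/\sqrt{2h}$, one may legitimately take $\rho = \min\{1 + \mathcal{C}/\sqrt{2h}, 3/2\}$ as indicated before the statement; in any case $1 < \rho < 2$, so the hypotheses of Lemma \ref{th:err_I_n} are met. Applying Theorem \ref{theo:quad_est} pointwise in $x$ (or directly in the $S(D)$-norm, since all the constants are uniform and the estimates pass through the norm in the same way the operator bounds of Theorem \ref{th:pert_dtn} do) yields, for each of the finitely many integrals in \eqref{eq:inverse_bloch},
\[
  \|I(\phi) - I_{N,M}(\phi)\|_{S(D)}
  \leq C\,N \left( \min\left\{ \frac{1+\sqrt{p}}{1-\sqrt{p}}, \sqrt{\frac{\rho}{2-\rho}} \right\} \right)^{-2M}
  + C\,(p^N h)^{3/2}.
\]
I would then set
\[
  \Theta = \min\left\{ \frac{1+\sqrt{p}}{1-\sqrt{p}}, \sqrt{\frac{\rho}{2-\rho}} \right\},
\]
and check that $\Theta > 1$: the first argument exceeds $1$ because $p \in (0,1)$ forces $1+\sqrt{p} > 1-\sqrt{p} > 0$, and the second exceeds $1$ because $\rho > 1$ implies $\rho > 2 - \rho$, whence $\rho/(2-\rho) > 1$. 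Since $\Theta$ depends only on the fixed parameters $p$ and $\rho$ and not on $N$ or $M$, it serves as the claimed uniform base, and summing the finitely many interval contributions only multiplies $C$ by a bounded factor.

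The main obstacle, and the point deserving the most care, is the passage from the scalar estimate of Theorem \ref{theo:quad_est} to an estimate in the $S(D)$-norm with a single constant $C$ independent of $x$. This requires that the bound $\max_{z \in \partial \widetilde{\E}^{\rho h}_{0,2h}} |\xi(z)|$ appearing in Lemma \ref{th:err_I_n} be replaced by $\max_{z \in \partial \widetilde{\E}^{\rho h}_{0,2h}} \|\phi(z,\cdot)\|_{S(D)}$, which is finite precisely because $\phi(\cdot,x)$ is a $C^\omega([0,2h];S(D))$-valued analytic function whose boundary values on the compact set $\partial\widetilde{\E}^{\rho h}_{0,2h}$ are bounded uniformly in the $S(D)$-norm, as furnished by the analytic-extension machinery of Section \ref{sec:extend}. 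One must verify that the Gauss--Legendre and trapezoidal error analyses are linear and hence apply verbatim to Banach-space-valued integrands, so that the contour-integral representation underlying \eqref{eq:err_Gaussian} carries through with $|\cdot|$ replaced by $\|\cdot\|_{S(D)}$ throughout. With this uniformity established, the remainder is the bookkeeping above, and the proof reduces to invoking Theorem \ref{theo:quad_est} on each subinterval.
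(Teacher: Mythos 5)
Your proposal is correct and follows essentially the same route as the paper: choose $\rho = \min\{1 + \mathcal{C}/\sqrt{2h},\, 3/2\}$ from the analytic extension to $\widetilde{\E}^{r}_{0,2h}$ furnished by Theorem \ref{theo:A_m_2_parabolas} and Corollary \ref{th:ellipse}, invoke Theorem \ref{theo:quad_est}, and set $\Theta = \min\bigl\{ \frac{1+\sqrt{p}}{1-\sqrt{p}},\, \sqrt{\rho/(2-\rho)} \bigr\}$, which coincides with the paper's explicit $\min\bigl\{ \frac{1+\sqrt{p}}{1-\sqrt{p}},\, \sqrt{(\sqrt{2h}+\mathcal{C})/(\sqrt{2h}-\mathcal{C})},\, \sqrt{3} \bigr\}$. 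Your additional remarks on transferring the scalar quadrature estimates to the $S(D)$-valued setting make explicit a point the paper leaves implicit, but do not change the argument.
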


\begin{proof}
  Set
  \[
    \Theta = \min\left\{ \frac{ 1 + \sqrt{p}  }{1 - \sqrt{p}}   \, , \; 
      \sqrt{ \frac{\sqrt{2h} + \mathcal{C}}{ \sqrt{2h} - \mathcal{C} } }  \, , \;
      \sqrt{3}
      \right\} > 1 \, .
  \]
  From our choice of $\rho$ and Theorem \ref{theo:quad_est}, the result follows.
\end{proof}

%



\section{Numerical approximation of scattering problems}

\subsection{Error estimation}
In this section, we conclude our analysis by providing error estimates for the numerical solution of the original scattering problem \eqref{eq:sca1}-\eqref{eq:sca3}. The algorithm can be divided into three steps:
\begin{algorithm}
\label{alg_num}
\begin{enumerate}
 \item Depending on $k$, find all the nodal points $\alpha_j$ and weights $\sigma_j$ where $j=1,2,\dots,L$.
 \item For any $\alpha_j$, compute the numerical solution of $w_\epsilon(\alpha_j,x)$, where $\epsilon>0$ is a parameter corresponding to the discretization (see below).
 \item Compute $u_{N,M,\epsilon}$ by the inverse Bloch transform:
 \[
 u_{N,M,\epsilon}(x):=\sum_{j=1}^L w_\epsilon\left(\alpha_j,x\right)e^{\i\alpha_j x_1}\sigma_j.
 \]
\end{enumerate}
\end{algorithm}

In this algorithm, it remains to discuss the second step, i.e. how to approximate $w(\alpha_j,x)$ numerically for any fixed $\alpha_j$. In principle, this may be carried out by any preferred numerical method for solving a boundary value problem in a periodic domain such as the integral equation method or the finite element method. In the present work, we have chosen the latter approach.

Assume that $\mathcal{M}_\epsilon$ is a family of regular, quasi-uniform triangular meshes in the finite domain $\Omega_H^{2\pi}$ with mesh width $0<\epsilon\leq \epsilon_0$, for some sufficiently small $\epsilon_0$. For simplicity, we assume that the nodal points on the left and right boundaries have got identical $x_2$-coordinates. We omit all the nodal points on the left boundary by imposing periodic boundary conditions at $-\pi$ and $\pi$, as well as  those on $\Gamma^{2\pi}$ due to the Dirichlet boundary conditions, and number the remaining nodes from $1$ to $M_0$. Let $\psi^j_M$, $j=1,2,\dots,M_0$, denote the globally continuous function that is $2\pi$-periodic with respect to $x_1$, linear on each mesh triangle and equals to $1$ at nodal point $j$ as well as to $0$ at all other nodal points. We define the space spanned by these functions by 
\[
 V_{\text{per},\epsilon} := {\rm span}\left\{\psi^j_{M_0}(x):\,j=1,2,\dots,M_0\right\}\subset \widetilde{H}^1_0(\Omega^{2\pi}_H) \, .
\]
For any fixed $\alpha$, we have the following error estimate for the Galerkin approximation to the solution of \eqref{eq:var_single}. For details we refer to \cite[Theorem 14]{Lechl2016a}.

\begin{theorem}
\label{th:err_FEM_alpha}
 Suppose that $\zeta\in C^{1,1}(\R)$. For any $\alpha\in W$, let $F(\alpha,\cdot)\in H^{1/2}_\p(\Gamma^{2\pi}_H)$ and denote by $w(\alpha,\cdot)$ the solution of the variational equation \eqref{eq:var_single}. Then $w(\alpha,\cdot) \in H^2(\Omega^{2\pi}_H)$. Moreover, when $\epsilon_0>0$ is sufficiently small and $w_\epsilon(\alpha,\cdot) \in V_{\p,\epsilon}$ solves
 \begin{equation}
  \label{eq:FEM_alpha}
  a_\alpha(w_\epsilon(\alpha,\cdot),\phi_\epsilon)=\int_{\Gamma^{2\pi}_H}F(\alpha,\cdot) \, \overline{\phi_\epsilon} \, \d s\quad\text{ for all }\phi_\epsilon\in V_{\p,\epsilon},
 \end{equation}
then 
\[
 \|w_\epsilon(\alpha,\cdot)-w(\alpha,\cdot)\|_{H^\ell(\Omega_H^{2\pi})}\leq C \epsilon^{2-\ell}\|F(\alpha,\cdot)\|_{H^{1/2}_\p(\Gamma^{2\pi}_H)},\quad \ell=0,1,
\]
where $C$ is independent of $\alpha\in W$.
\end{theorem}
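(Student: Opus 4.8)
The statement bundles two assertions that I would establish in sequence: first the elliptic regularity $w(\alpha,\cdot)\in H^2(\Omega^{2\pi}_H)$ of the exact solution, and then the quasi-optimal Galerkin bound, both with constants that must ultimately be checked to be uniform in $\alpha\in W$. The overall plan follows the standard theory for conforming Galerkin discretisations of indefinite (Helmholtz-type) sesquilinear forms: a G\aa{}rding inequality plus uniqueness to obtain a discrete inf-sup condition for small mesh width, C\'ea-type quasi-optimality for the $H^1$ estimate, and an Aubin--Nitsche duality argument for the $L^2$ estimate.

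For the regularity assertion I would note that $a_\alpha$ differs from the form of $-\Delta+1$ only through the $\alpha$-dependent first- and zeroth-order terms and through the boundary contribution involving $T^+_\alpha$. The operator $T^+_\alpha$ is a Fourier multiplier that maps $H^{s}_\p(\Gamma^{2\pi}_H)$ into $H^{s-1}_\p(\Gamma^{2\pi}_H)$ for every $s$, so for $F(\alpha,\cdot)\in H^{1/2}_\p(\Gamma^{2\pi}_H)$ the effective data on $\Gamma^{2\pi}_H$ has the regularity needed for an $H^2$-shift. Since $\zeta\in C^{1,1}(\R)$ makes the scatterer boundary of class $C^{1,1}$, the interior-plus-boundary elliptic regularity (shift) theorem for second-order problems with mixed Dirichlet/impedance-type conditions applies and yields $w(\alpha,\cdot)\in\widetilde{H}^2_\p(\Omega^{2\pi}_H)$ together with the a priori bound $\|w(\alpha,\cdot)\|_{H^2(\Omega^{2\pi}_H)}\leq C\,\|F(\alpha,\cdot)\|_{H^{1/2}_\p(\Gamma^{2\pi}_H)}$.

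For the case $\ell=1$ I would invoke the Schatz argument. The form $a_\alpha$ satisfies a G\aa{}rding inequality $\Re a_\alpha(v,v)\geq c_0\,\|v\|_{V_H^\p}^2-C_0\,\|v\|_{L^2(\Omega^{2\pi}_H)}^2$: the boundary term $-\int_{\Gamma^{2\pi}_H}T^+_\alpha v\,\overline{v}\,\d s$ has non-negative real part coming from the evanescent Fourier modes, and the indefinite zeroth-order part is a compact $L^2$-perturbation. Combining this with the unique solvability of \eqref{eq:var_single} established earlier and with the approximation property of $V_{\p,\epsilon}$ (piecewise-linear interpolation of an $H^2$-function has $\mathrm{O}(\epsilon)$ error in $H^1$ and $\mathrm{O}(\epsilon^2)$ in $L^2$) gives, for all $\epsilon\leq\epsilon_0$ with $\epsilon_0$ sufficiently small, a discrete inf-sup condition and hence quasi-optimality. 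Thus $\|w-w_\epsilon\|_{H^1}\leq C\inf_{v_\epsilon\in V_{\p,\epsilon}}\|w-v_\epsilon\|_{H^1}\leq C\,\epsilon\,\|w\|_{H^2}\leq C\,\epsilon\,\|F\|_{H^{1/2}_\p(\Gamma^{2\pi}_H)}$.

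For $\ell=0$ I would run the Aubin--Nitsche duality: let $z$ solve the adjoint problem with right-hand side $w-w_\epsilon\in L^2$, so that the regularity step applied to the adjoint form gives $\|z\|_{H^2}\leq C\,\|w-w_\epsilon\|_{L^2}$; testing the Galerkin orthogonality $a_\alpha(w-w_\epsilon,v_\epsilon)=0$ against an interpolant of $z$ gains one power of $\epsilon$ and yields $\|w-w_\epsilon\|_{L^2}\leq C\,\epsilon\,\|w-w_\epsilon\|_{H^1}\leq C\,\epsilon^2\,\|F\|_{H^{1/2}_\p(\Gamma^{2\pi}_H)}$. The step I expect to be the main obstacle is the \emph{uniformity} of $C$ over $\alpha\in W$: the G\aa{}rding constant, the bound $\|\mathcal{B}_\alpha^{-1}\|\leq M$ (available from Theorem 4.1 of \cite{Chand2005}), the norm of $T^+_\alpha$, the $H^2$-regularity constant, and the threshold $\epsilon_0$ must all be controlled uniformly. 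This is possible because $W$ is a fixed bounded parameter set on which these quantities depend continuously, but it requires verifying that the inf-sup threshold produced by the compactness/duality argument does not degenerate as $\alpha$ ranges over $W$, with particular care near the cutoff values where a mode of $T^+_\alpha$ vanishes.
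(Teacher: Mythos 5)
The paper itself does not prove Theorem \ref{th:err_FEM_alpha}: it imports the result verbatim from the literature (``For details we refer to \cite[Theorem 14]{Lechl2016a}''), so there is no internal proof to compare against. Your outline --- G\aa{}rding inequality plus uniqueness, Schatz's discrete inf-sup/quasi-optimality argument for $\ell=1$, Aubin--Nitsche duality for $\ell=0$, and an $H^2$ shift estimate for the regularity claim --- is the standard route and is essentially what the cited reference carries out, so on the level of strategy your proposal is sound. Two remarks on the points you left open. First, the uniformity in $\alpha$ that you flag as the main obstacle is already available in the paper: Section \ref{sec:extend} quotes from \cite{Chand2005} (Theorem 4.1 there) the uniform bound $\| \mathcal{B}_\alpha^{-1} \| \leq M$ for all $\alpha$, and your G\aa{}rding constant is uniform as well since the first-order term satisfies $|2\alpha \int \partial_{x_1} v \, \overline{v}\,| \leq \tfrac12 \|\grad v\|^2_{L^2} + 2\alpha^2 \|v\|^2_{L^2}$ with $\alpha$ in a bounded set; together these make the Schatz threshold $\epsilon_0$ and all constants uniform, including at the cutoff values, so this is not actually a gap. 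Second, be careful with the regularity step: because $T^+_\alpha$ is a \emph{nonlocal} first-order multiplier, the naive bootstrap fails --- from $w(\alpha,\cdot) \in H^1$ one only gets $T^+_\alpha (w|_{\Gamma_H^{2\pi}}) \in H^{-1/2}_\p(\Gamma_H^{2\pi})$, and Neumann data in $H^{-1/2}$ returns only $H^1$ regularity, i.e.\ no gain. The standard fix is to observe that the DtN condition means $w$ coincides near $\Gamma_H^{2\pi}$ with a superposition of upward-propagating and evanescent Rayleigh modes, hence extends as a solution across $\Gamma_H^{2\pi}$ and is smooth there by interior regularity; the $C^{1,1}$ Dirichlet shift theorem is then needed only near $\Gamma$. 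With that repair your argument is complete and matches the cited source.
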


From Theorems \ref{th:err_inv_bloch} and \ref{th:err_FEM_alpha}, we can immediately derive an error estimate for the solution computed using Algorithm \ref{alg_num}.

\begin{theorem}
 \label{th:err_num}
 Suppose that $f\in H^{1/2}_r(\Omega_H)$ with $r\in (1/2,1)$ such that $F(\alpha,x):=(\J f)(\alpha,x) \in \mathcal{A}^\omega\left((-\underline{k},1-\underline{k}];H^{1/2}_\p(\Gamma^{2\pi}_H);E\right)$. Then the error between numerical approximation $u_{N,M,\epsilon}$ from Algorithm \ref{alg_num} and the exact solution $u$ is bounded by
 \[
  \|u_{N,M,\epsilon}-u\|_{H^\ell(\Omega^{2\pi}_H)}\leq C\left[\epsilon^{2-\ell} + N \Theta^{-2M} + p^{3N/2}\right],\quad \ell=0,1,
 \]
where $\Theta>1$ is defined as in Theorem \ref{th:err_inv_bloch} and $C$ depends on $\|f\|_{H^{1/2}_r(\Omega_H)}$, $k$ and $p$.

\end{theorem}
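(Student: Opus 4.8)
The plan is to combine the two error sources that Algorithm \ref{alg_num} introduces: the finite element discretization error at each fixed Floquet parameter, and the quadrature error of the inverse Bloch transform. These two sources are essentially decoupled, and the strategy is to insert an intermediate object—the inverse transform computed from the \emph{exact} quasi-periodic solutions but approximated by the same quadrature rule—and estimate the two resulting differences separately via a triangle inequality.

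First I would write $u = \J^{-1} w = I(\phi)$ after the change of variables described above, where on each of the (at most four) subintervals of \eqref{eq:inverse_bloch} the integrand has the form $\phi(\alpha,x) = \phi_1(\alpha,x) + \sqrt{\alpha}\,\phi_2(\alpha,x)$ with $\phi_1,\phi_2 \in C^\omega([0,2h];S(D))$, guaranteed by Theorem \ref{th:reg_bloch}. I would introduce $\widetilde{u}_{N,M} := I_{N,M}(\phi)$, the quadrature applied to the \emph{exact} transformed field, and then split
\[
  \| u_{N,M,\epsilon} - u \|_{H^\ell}
  \leq \| u_{N,M,\epsilon} - \widetilde{u}_{N,M} \|_{H^\ell}
     + \| \widetilde{u}_{N,M} - u \|_{H^\ell} \, .
\]
The second term is controlled directly by Theorem \ref{th:err_inv_bloch} with $S(D) = H^\ell(\Omega_H^{2\pi})$, giving the contribution $C\,(N\,\Theta^{-2M} + (p^N h)^{3/2})$; since $h \leq 1/2$ is bounded, $(p^N h)^{3/2}$ is absorbed into $C\,p^{3N/2}$.

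For the first term, the difference $u_{N,M,\epsilon} - \widetilde{u}_{N,M}$ is exactly the same quadrature formula applied to the integrand $(w_\epsilon(\alpha_j,\cdot) - w(\alpha_j,\cdot))\,\EE^{\i\alpha_j x_1}$ evaluated at the nodal points. The plan is to bound it crudely by the sum of the magnitudes of the quadrature weights times the nodewise FEM error: each weight is positive and the total weight equals the interval length (at most $1/2$), so $\sum_j |\sigma_j| \leq C$ independently of $N,M$. By Theorem \ref{th:err_FEM_alpha} every nodewise error is bounded by $C\,\epsilon^{2-\ell}\,\|F(\alpha_j,\cdot)\|_{H^{1/2}_\p}$, and the hypothesis $F \in \mathcal{A}^\omega((-\underline{k},1-\underline{k}];H^{1/2}_\p(\Gamma_H^{2\pi});E)$ ensures these norms are uniformly bounded over all nodes (since $\mathcal{A}^\omega \subset C^0$). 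Hence this term is $\leq C\,\epsilon^{2-\ell}$, with $C$ depending on $\|f\|_{H^{1/2}_r}$. Adding the two contributions and relabeling $\Theta^{-2M}$ and $p^{3N/2}$ yields the claimed bound.

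The main obstacle I anticipate is verifying that the constant $C$ in the FEM estimate is genuinely uniform over the nodal parameters $\alpha_j$, and, relatedly, that $F$ landing in $H^{1/2}_\p$ (rather than merely $H^{-1/2}_\p$) is the right hypothesis to invoke the $\ell=0,1$ estimates of Theorem \ref{th:err_FEM_alpha}. One must confirm that the nodes $\alpha_j$ avoid the singular set $E$ so that $w(\alpha_j,\cdot) \in H^2$ and the $H^2$-regularity behind the $\epsilon^{2-\ell}$ rate applies; this is automatic since the graded mesh places no node at an endpoint where the square-root singularity sits. A minor technical point is that the FEM space is $\widetilde{H}^1_0$ while $w$ is quasi-periodic, but this is handled by the standard identification already used in Theorem \ref{th:err_FEM_alpha}, so I would simply cite it rather than re-derive it.
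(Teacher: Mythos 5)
Your proposal is correct and follows essentially the same route as the paper's proof: the same triangle inequality through the intermediate object (the quadrature rule applied to the exact transformed field $w$), Theorem \ref{th:err_inv_bloch} for that term, and Theorem \ref{th:err_FEM_alpha} summed against the quadrature weights (whose total is the interval length) for the finite element term. Two minor caveats: the paper secures the uniform bound on $\|F(\alpha,\cdot)\|_{H^{1/2}_\p(\Gamma^{2\pi}_H)}$ via the Bloch-transform isomorphism plus the Sobolev embedding $H^r\hookrightarrow C^0$ for $r>1/2$, which is precisely what makes the constant depend on $\|f\|_{H^{1/2}_r}$ as claimed, whereas your appeal to $\mathcal{A}^\omega\subset C^0$ yields boundedness but not that explicit dependence; and your side remark that no quadrature node sits at a square-root singularity is false---the trapezoidal rule on $J_0$ evaluates the integrand at the endpoint lying in $E$---though this is immaterial, since the integrand extends continuously there and the paper applies the FEM estimate at that node exactly as at the others.
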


\begin{proof}
 We only present detailed arguments for the case that $\underline{k}=0, 0.5$. For the other cases, the proof is carried out similarly. Using the estimate from Theorem \ref{th:err_inv_bloch}, we obtain
\[
 \begin{aligned}
   \|u_{N,M,\epsilon}-u\|_{H^\ell(\Omega^{2\pi}_H)}&\leq  \left\|\sum_{\ell=1}^2\sum_{m=1}^{NM+2}w_\epsilon\left(\alpha_m^\ell,\cdot\right)e^{\i\alpha_m^\ell()_1}w_m^\ell-\sum_{\ell=1}^2\sum_{m=1}^{NM+2}w\left(\alpha_m^\ell,\cdot\right)e^{\i\alpha_m^\ell()_1}w_m^\ell\right\|_{H^\ell(\Omega^{2\pi}_H)}\\
   &\qquad+\left\|\sum_{\ell=1}^2\sum_{m=1}^{NM+2}w\left(\alpha_m^\ell,\cdot\right)e^{\i\alpha_m^\ell()_1}w_m^\ell-u\right\|_{H^\ell(\Omega^{2\pi}_H)}\\
   &\leq \sum_{\ell=1}^2\sum_{m=1}^{NM+2}w_m^\ell\left\|w_\epsilon\left(\alpha_m^\ell,\cdot\right)-w\left(\alpha_m^\ell,\cdot\right)\right\|_{H^\ell(\Omega^{2\pi}_H)}+C \, N \, \Theta^{-2M}+Cp^{3N/2}\\
   &\leq C\epsilon^{2-\ell}\sum_{\ell=1}^2\sum_{m=1}^{NM+2}w_m^\ell\|F(\alpha_m^\ell,\cdot)\|_{H^{1/2}_\p(\Gamma^{2\pi}_H)} + C\, N \, \Theta^{-2M}+Cp^{3N/2},
 \end{aligned}
\]
where $()_1$ denotes the first coordinate of a two dimensional argument vector.  As $f\in H^{1/2}_r(\Gamma^{2\pi}_H)$ for $r>1/2$, by Theorem \ref{th:reg_bloch} we have $F=\J f\in  H^r((-\underline{k},1-\underline{k}];H^{1/2}_\p(\Gamma^{2\pi}_H))$, and 
 \[
  \|F\|_{H_0^r((-\underline{k},1-\underline{k}];H^{1/2}_\p(\Gamma^{2\pi}_H))}=\|f\|_{H^{1/2}_r(\Gamma^{2\pi}_H)}.
 \]
From Sobolev's embedding theorem, $F \in C^0((-\underline{k},1-\underline{k}];H^{1/2}_\p(\Gamma^{2\pi}_H))$ and
\[
 \|F\|_{C^0((-\underline{k},1-\underline{k}];H^{1/2}_\p(\Gamma^{2\pi}_H))}\leq C\|F\|_{H_0^r((-\underline{k},1-\underline{k}];H^{1/2}_\p(\Gamma^{2\pi}_H))}=\|f\|_{H^{1/2}_r(\Gamma^{2\pi}_H)}.
\]
From the fact that $\sum_{\ell=1}^2\sum_{m=1}^{NM+2}w_m^\ell=h$,
\[
\|u_{N,M,\epsilon}-u\|_{H^\ell(\Omega^{2\pi}_H)}\leq C\epsilon^{2-\ell}\|f\|_{H^{1/2}_r(\Gamma^{2\pi}_H)}+C\, N \, \Theta^{-2M}+Cp^{3N/2}\leq C\left[\epsilon^{2-\ell}+ N \, \Theta^{-2M}+p^{3N/2}\right].
\]
 The proof is finished.
\end{proof}

\subsection{Numerical experiments}

We present six numerical examples that demonstrate the convergence properties of  Algorithm \ref{alg_num}. In all of the examples, we use the same periodic surface given by
\[
 \zeta(t)=\frac{3}{2}+\frac{\sin t}{3}-\frac{\cos 2t}{4}.
\]
The following parameters are also fixed:
\[
 H=3 \, , \quad p=0.5 \, , \quad h=0.5 \, .
\]
We use two different wave numbers, $k=\sqrt{2}$ and $k = 10$, respectively. Note that when $k=\sqrt{2}$, $E=\{1-\sqrt{2},\sqrt{2}-1,2-\sqrt{2}\}$ whereas when $k=10$, $E=\{0,1\}$. 

We also use three different incident fields,
\[
    u^i_1(k,x)=\Phi(x,a_1)-\Phi(x,a'_1);\quad 
     u^i_2(k,x)=\Phi(x,a_2)-\Phi(x,a'_2);\quad
     u^i_3(k,x)=\int_{-\pi/2}^{\pi/2}e^{\i k x_1\sin t-\i k x_2\cos t}g(t)\d t.
\]
Here, $\Phi(x,y) = \frac{\i}{4}H_0^{(1)}(k|x-y|)$ denotes the free space fundamental solution of the Helmholtz equation and $H_0^{(1)}(\cdot)$ is the Hankel function of the first kind of order $0$. As source points we use $a_1=(0.4,0.2)^\bot$ and $a'_1=(0.4,-0.2)^\bot$;  $a_2=(0.4, 3)^\bot$ and $a'_2=(0.4,-3)^\bot$. $u^i_3$ is a downward propagating Herglotz wave function with the density function $g$ defined as
\[
g(t)=\begin{cases}\displaystyle
\frac{(x-a)^6(x-b)^6}{((b-a)/2)^{12}},\quad a<x<b;\\
\displaystyle
0,\quad\text{ otherwise;}
\end{cases}
\]
with $a=0.4,\,b=0.5$. 

\vspace{0.2cm}
With the definitions of the three incident fields, we apply Algorithm \ref{alg_num} to the following examples.
\noindent
\begin{itemize}
    \item {\bf Example 1. }$k=\sqrt{2}$, $u^i(x)=u^i_1\left(\sqrt{2},x\right)$.
\item {\bf Example 2. }$k=10$, $u^i(x)=u^i_1\left(10,x\right)$.
\item {\bf Example 3. }$k=\sqrt{2}$, $u^i(x)=u^i_2\left(\sqrt{2},x\right)$.
\item {\bf Example 4. }$k=10$, $u^i(x)=u^i_2\left(10,x\right)$.
\item {\bf Example 5. }$k=\sqrt{2}$, $u^i=u^i_3\left(\sqrt{2},x\right)$.
\item {\bf Example 6. }$k=10$, $u^i(x)=u^i_3\left(10,x\right)$.
\end{itemize}

For all the examples, we collect the value of $u_{N,M,\epsilon}$ on the line segment $\Gamma_h:=[-\pi,\pi]\times\{2.9\}$ and study the dependence of errors on the parameters $N$, $M$ and $\epsilon$. Supposing we know the exact solution $u_{exa}$ on $\Gamma_h$, we can compute the relative error defined by
\[
err_{N,M,\epsilon}:=\frac{\left\|u_{N,M,\epsilon}-u_{exa}\right\|_{L^2(\Gamma_h)}}{\left\|u_{exa}\right\|_{L^2(\Gamma_h)}}.
\]

In Examples 1 and 2, since $u^i$ is the half-space Green's function with source $(0.4,0.2)$ which lies below the periodic surface, $u^i$ satisfies the radiation condition \eqref{eq:sar}, i.e., $f=0$ in \eqref{eq:sca3}. Thus we have to modify the problem \eqref{eq:var}. In this case, we are looking for a solution $u^s\in H^1_r(\Omega_H)$ such that
\[
\int_{\Omega_H}\left[\nabla u^s\cdot\nabla\overline{\phi}-k^2 u^s\overline{\phi}\right]\d x-\int_{\Gamma_H}T^+\left(u\big|_{\Gamma_H}\right)\overline{\phi}\d s=0
\]
with the boundary condition $u^s=-u^i$ on $\Gamma$. It is well known that the exact solution $u_{exa}=-u^i$ in  $\Omega$.  For each example, we first fix sufficiently large $M$ and $N$ ($M=10$, $N=20$) and check the dependence of error on the finite element discretization, i.e., for $\epsilon=0.04$, $0.02$, $0.01$, $0.005$, we compute the relative errors. The results are shown in Table \ref{tab:group_1_1} and are plotted on both logarithm scales in Figure \ref{fig:egs} picture (a). Since the slopes of both curves are approximately $2$, the convergence rates coincide with that shown in Theorem \ref{th:err_num}.

Next, we study the convergence of Algorithm \ref{alg_num} with respect to the parameters $M$ and $N$. Fix $\epsilon=0.005$, and compute the relative errors with $M=2,3,4,5$ and $N=4,8,12,16,20$ (also $24$ when $k=\sqrt{2}$). The results are presented in Tables \ref{tab:group_1_2_1} and \ref{tab:group_1_2_2}. In both tables, showing results for Examples 1 and 2, respectively, we clearly observe convergence of the numerical solution when $N$ and $M$ increase. When both these numbers are sufficiently large, the errors no longer decay, since the discretization error of the finite element method plays the more important role. The errors appear to be more sensitive with respect to the parameter $N$, since for $M\geq 3$ we observe that the errors almost only depend on $N$.

\begin{table}[]
    \centering
    \begin{tabular}{|c|c|c|c|c|}
    \hline
    $k$ & $\epsilon=0.04$& $\epsilon=0.02$& $\epsilon=0.01$& $\epsilon=0.005$\\
    \hline\hline
        $\sqrt{2}$ & $3.5\times 10^{-4}$ &$8.8\times 10^{-5}$&  $2.2\times 10^{-5}$ & $6.2\times 10^{-6}$\\
        \hline
        $10$ & $8.5\times 10^{-2}$ & $2.2\times 10^{-2}$  &$5.5\times 10^{-3}$ & $1.4\times 10^{-3}$\\
        \hline
    \end{tabular}
    \caption{Relative errors of Example 1 and 2, with respect to $\epsilon$.}
    \label{tab:group_1_1}
\end{table}

\begin{table}[]
    \centering
    \begin{tabular}{|c|c|c|c|c|c|c|}
    \hline
     & $N=4$ & $N=8$ & $N=12$  & $N=16$& $N=20$ & $N=24$ \\
    \hline\hline
        $M=2$  &  $6.7\times10^{-2}$  &$4.3\times10^{-3}$ &$3.2\times10^{-4}$ & $1.3\times10^{-4}$& $1.3\times10^{-4}$ &$1.3\times10^{-4}$\\
        \hline
         $M=3$  &  $6.7\times10^{-2}$  &$4.3\times10^{-3}$ &$2.7\times10^{-4}$ & $2.0\times10^{-5}$&$6.2\times10^{-6}$ &$5.7\times10^{-6}$\\
            \hline
        $M=4$  &  $6.7\times10^{-2}$  &$4.3\times10^{-3}$ &$2.7\times10^{-4}$ &$2.0\times10^{-5}$ &$6.2\times10^{-6}$& $5.6\times10^{-6}$\\
        \hline
        $M=5$ &  $6.7\times10^{-2}$ &$4.3\times10^{-3}$ &$2.7\times10^{-4}$ &$2.0\times10^{-5}$ &$6.2\times10^{-6}$&$5.6\times10^{-6}$\\
         \hline
    \end{tabular}
    \caption{Relative errors of Example 1, with respect to $M$ and $N$.}
    \label{tab:group_1_2_1}
\end{table}

\begin{table}[]
    \centering
    \begin{tabular}{|c|c|c|c|c|c|}
    \hline
     & $N=4$ & $N=8$ & $N=12$  & $N=16$& $N=20$ \\
    \hline\hline
        $M=2$  &  $8.3\times 10^{-2}$  & $8.3\times 10^{-2}$&$8.3\times 10^{-2}$ & $8.3\times 10^{-2}$& $8.3\times 10^{-2}$ \\
        \hline
         $M=3$  & $5.6\times10^{-3}$   &$5.9\times10^{-3}$ & $5.9\times10^{-3}$&$5.9\times10^{-3}$& $5.9\times10^{-3}$\\
            \hline
        $M=4$  &  $1.9\times10^{-3}$ & $1.5\times10^{-3}$& $1.5\times10^{-3}$& $1.5\times10^{-3}$& $1.5\times10^{-3}$\\
        \hline
        $M=5$ &  $1.9\times10^{-3}$ & $1.4\times10^{-3}$& $1.4\times10^{-3}$& $1.4\times10^{-3}$&$1.4\times10^{-3}$\\
         \hline
    \end{tabular}
    \caption{Relative errors of Example 2, with respect to $M$ and $N$.}
    \label{tab:group_1_2_2}
\end{table}

In Example 3 and 4, the incident fields are point sources located above the periodic surface; and in Example 5 and 6, the incident fields are Herglotz wave functions propagating downwards. For all these examples, we no longer know the exact solutions. Instead, we choose parameters ($M=10$, $N=20$ and $\epsilon=0.005$) for which we expect the result to be sufficiently accurate  and use the corresponding numerical solutions $u_{N,M,\epsilon}$ as a reference solution instead of an exact solution. For a plot of the wave field in Example 6 we refer to Figure \ref{fig:eg6}.  For all examples, we first fix $N=20$ and compute relative errors with $M=2$, $3$, $4$, $5$; in a second set of computations, we fix $M=10$ and compute relative errors with $N=4$, $6$, $8$, $10$, $12$. The relative errors with respect to $M$ are given in Table \ref{tab:group_2_1} and plotted in (b) Figure \ref{fig:egs}, while relative errors with respect to $N$ are given in Table \ref{tab:group_2_2} and plotted (c) Figure \ref{fig:egs}. From both graphs, we clearly observe the exponential convergence we expected from Theorem \ref{th:err_num}.

At the end, we also discuss the convergence rates with respect to the parameters $M$ and $N$. First let's focus on the dependence of $M$. The slopes of the curves in (b) Figure \ref{fig:egs} are approximately $-4.4$ (Example 3), $-3.2$ (Example 4), $-2.1$ (Example 5) and $-3.4$ (Example 6).  Thus exponential convergence is observed with respect to the parameter $M$. Similarly,  slopes of the curves in (c) Figure \ref{fig:egs} are approximately $-0.68$ (Example 3), $-0.55$ (Example 4), $-0.79$ (Example 5) and $-0.70$ (Example 6) which show the exponential convergence with respect to the parameter $N$. The convergence results coincide with the theoretical results in Theorem \ref{th:err_num}.

\begin{table}[]
    \centering
    \begin{tabular}{|c|c|c|c|c|}
    \hline
     & $M=2$& $M=3$& $M=4$& $M=5$ \\
    \hline\hline
        Eg 3 & $7.4\times 10^{-4}$ & $9.0\times 10^{-6}$ & $1.0\times 10^{-7}$ & $1.6\times 10^{-9}$ \\
        \hline
         Eg 4 & $3.5\times 10^{-2}$  & $2.5\times 10^{-3}$& $1.0\times 10^{-4}$ & $2.5\times 10^{-6}$ \\
        \hline
        Eg 5 &$2.8\times 10^{-4} $  &$2.6\times 10^{-5} $ & $1.4\times 10^{-6} $ & $2.2\times 10^{-7} $\\
        \hline
         Eg 6 & $1.6\times 10^{-5} $ & $1.6\times 10^{-7} $&  $2.8\times 10^{-9} $& $5.9\times 10^{-11} $\\
        \hline
    \end{tabular}
    \caption{Relative errors of Example 3-6 with respect to $M$.}
    \label{tab:group_2_1}
\end{table}

\begin{table}[]
    \centering
    \begin{tabular}{|c|c|c|c|c|c|}
    \hline
     & $N=4$& $N=6$ & $N=8$& $N=10$ & $N=12$\\
    \hline\hline
        Eg 3 & $9.8\times 10^{-2}$& $2.6\times 10^{-2}$ & $6.8\times 10^{-3}$ &$1.7\times 10^{-3}$& $4.3\times 10^{-4}$\\
        \hline
         Eg 4  &$1.5\times 10^{-1}$& $7.5\times 10^{-2}$ &$2.6\times 10^{-2}$ & $7.6\times 10^{-3}$ & $2.0\times 10^{-3}$\\
        \hline
        Eg 5 &   $9.1\times10^{-2}$ & $2.3\times 10^{-2}$ & $5.7\times 10^{-3}$ &$1.4\times 10^{-3}$ & $3.8\times10^{-4}$\\
        \hline
         Eg 6 & $6.3\times 10^{-2}$ & $1.6\times 10^{-2}$& $4.0\times 10^{-3}$ &$9.9\times 10^{-4}$  &$2.5\times 10^{-4}$\\
        \hline
    \end{tabular}
    \caption{Relative errors of Example 3-6 with respect to $N$.}
    \label{tab:group_2_2}
\end{table}

\begin{table}[]
    \centering
    \begin{tabular}{c c c}
      (a)  & (b) & (c)\\
       \includegraphics[width=0.33\textwidth]{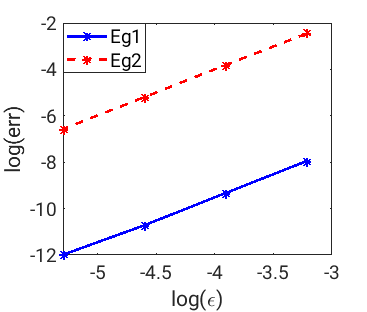}
         & 
          \includegraphics[width=0.33\textwidth]{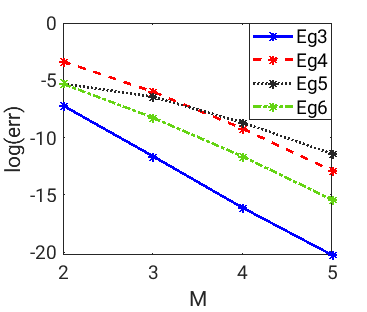}&
           \includegraphics[width=0.33\textwidth]{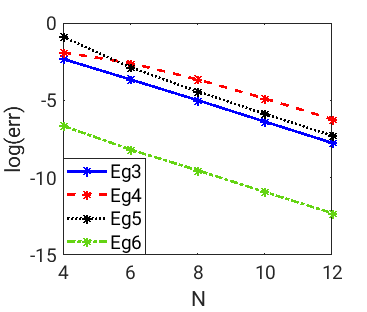}
    \end{tabular}
    \caption{Dependence of relative errors on parameters (a) $\epsilon$, (b) $M$ and (c) $N$.}
    \label{fig:egs}
\end{table}

\begin{table}[]
    \centering
    \begin{tabular}{c c }
      (a)  & (b)\\
       \includegraphics[width=0.45\textwidth]{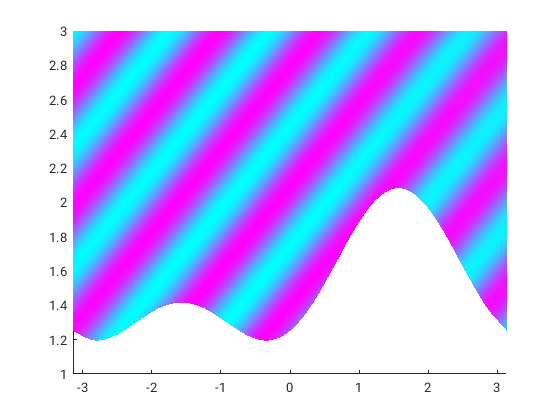}
         & 
          \includegraphics[width=0.45\textwidth]{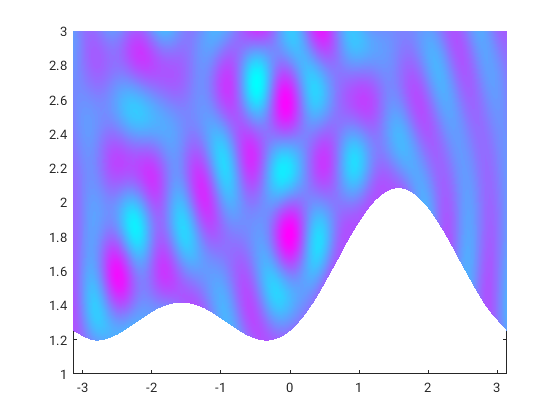}
    \end{tabular}
    \caption{Real part of waves in Example 6: (a) incident field, (b) scattered field.}
    \label{fig:eg6}
\end{table}

\section*{Acknowledgements.} 

This research was funded by the Deutsche Forschungsgemeinschaft (DFG, German Research Foundation) -- Project-ID 258734477 -- SFB 1173.

\bibliographystyle{plain}
\bibliography{ip-biblio} 

\end{document}